\newtheorem{theorem}{Theorem}[section] %
\newtheorem{example}{Example}[section]
\newtheorem{lemma}{Lemma}[section]
\def\3bar{{|\hspace{-.02in}|\hspace{-.02in}|}}
\numberwithin{equation}{section}
\journal{Journal of Computational and Applied Mathematics}
\begin{document}

\begin{frontmatter}

\title{A posteriori error estimation for weak Galerkin method of the fourth-order singularly perturbed problem}

\author[mymainaddress]{Shicheng Liu}
\ead{lsc22@mails.jlu.edu.cn}

\author[mymainaddress]{Qilong Zhai\corref{mycorrespondingauthor}}
\cortext[mycorrespondingauthor]{Corresponding author}
\ead{zhaiql@jlu.edu.cn}

\address[mymainaddress]{School of Mathematics, Jilin University, Changchun {130012}, China}

\begin{abstract}
	In this paper, we present a posteriori error estimation for weak Galerkin method applied to fourth order singularly perturbed problem. The weak Galerkin discretization space and numerical scheme are first described. A fully computable residual type error estimator is then constructed. Both the reliability and efficiency of the proposed estimator are rigorously demonstrated. Numerical experiments are provided to validate the theoretical findings.
\end{abstract}

%

\begin{keyword}
 


Weak Galerkin method, fourth-order singularly perturbed problem, a posteriori error analysis.

\MSC[2020] 65N15 \sep 65N30 \sep 35B25

\end{keyword}

\end{frontmatter}


\section{Introduction}
\label{section:introduction}
For given a bounded domain $\Omega \subset \mathbb{R}^{2}$ and $f\in L^{2}(\Omega)$, we consider the following fourth-order singularly perturbed elliptic boundary value problem
\begin{align}
  \varepsilon ^{2}\Delta ^{2}u-\Delta u&=f, \quad \text{in}~\Omega,\label{1.1}\\
  u= \nabla u \cdot \mathbf{n} &=0, \quad \text{on}~\partial\Omega.\label{1.2}
\end{align}
In singularly perturbed models, the parameter $0 < \varepsilon \ll 1$ is a non-negative real number conventionally referred to as the singular perturbation parameter. The boundary value problem (\ref{1.1})-(\ref{1.2}) arises in the linear elasticity modeling of sufficiently thin buckling plates, where $u$ represents the displacement in a clamped plate model. The parameter $\varepsilon$, assumed to be small enough, is defined by $\varepsilon = t^{3} E/12(1- \nu^{2})\iota^{2}S$, where $t$ denotes the plate thickness, $E$ is Young's modulus of the elastic material, $\nu$ is the Poisson ratio, $iota$ represents the characteristic diameter of the plate, and $S$ denotes the measure of the density of the isotropic stretching force.

The numerical analysis of fourth-order singularly perturbed problems has been the subject of extensive research within the scientific community \cite{06SPP,21SPP,27SPP,32SPP,34SPP}. Meng and Stynes \cite{26SPP} investigated the Adini finite element method for such problems on a Shishkin mesh in the context of fourth-order problems. The $C^{0}$ interior penalty finite element method has been developed for fourth-order singularly perturbed problems in \cite{03SPP,18SPP}. Constantinou et al. proposed an hp-finite element method to solve these problems in \cite{12SPP,13SPP}, while the convergence of a mixed finite element method was examined in \cite{16SPP}. Guo et al. cite{19SPP} analyzed a standard $C^{1}$-conforming finite element method of polynomial degree $p$ on a one-dimensional mesh. Furthermore, Franz et al. \cite{17SPP} established error estimates in a balanced norm for finite element methods applied to higher-order reaction-diffusion problems.

Over recent decades, computation with adaptive grid refinement has established itself as a valuable and efficient methodology in scientific computing. Central to this technique is the design of an accurate a posteriori error estimator, which offers guidance on where and how to refine the grid. An estimator is deemed reliable if it provides a rigorous upper bound for the exact error, and efficient if it furnishes a corresponding lower bound. Upper bounds combined with lower bounds yield error indicators of optimal order, enabling efficient mesh refinement. Computable a posteriori error estimates and adaptive strategies for fourth-order problems have garnered growing interest over the last twenty years. For examples, the conforming approximations of problems involving the biharmonic operator of \cite{46Postertiori}, the treatment of Morley plates \cite{09Postertiori,33Postertiori}, quadratic $C^0$-conforming interior penalty methods \cite{14Postertiori} and general order discontinuous Galerkin methods \cite{28Postertiori} for the biharmonic problem, continuous and discontinuous Galerkin approximations of the Kirchhoff-Love plate \cite{30Postertiori}, the dichotomy principle in a posteriori error estimates for fourth-order problems \cite{02Postertiori}, and the Ciarlet-Raviart formulation of the first biharmonic problem \cite{20Postertiori}.

The weak Galerkin (WG) method has proven to be an effective numerical technique for solving partial differential equations. It was initially introduced by Junping Wang and Xiu Ye in \cite{wysec2} for second order elliptic problems. The fundamental idea of the WG method lies in constructing separate approximation functions on the interior and the boundary of each mesh cell, and replacing the classical differential operator with a discretized weak differential operator. The WG method has been successfully applied to Stokes equations \cite{wy1302,WangZhaiZhangS2016,wangwangliu2022}, elasticity equations \cite{chenxie2016,Liu,lockingw,Yisongyang}, Maxwell's equations \cite{MLW14}, biharmonic equations \cite{MuWangYeZhang14,ZhangZhai15}, Navier-Stokes equations \cite{HMY2018,LLC18,zzlw2018}, Brinkman equations \cite{MuWangYe14,WangZhaiZhang2016,ZhaiZhangMu16}, as well as in the contexts of the multigrid approach \cite{CWWY15} and the maximum principle \cite{maximumwang2,WangYeZhaiZhang18}. In addition, the WG method has produced promising results for singularly perturbed problems, such as one and two dimensional convection-diffusion problems \cite{WG_1D_Bakhvalov,WG_1D_Shishkin,WG_2D_Shishkin}and the singularly perturbed biharmonic equation on uniform meshes \cite{singularly_perturbed_biharmonic}.

The goal of this paper is to develop a residual based a posteriori error estimator within the WG method for fourth-order singularly perturbed problems, and to establish its theoretical reliability and efficiency. This paper is organized as follows. In Section 2, we introduce the Shishkin mesh and the assumptions associated. In Section 3, we give the definitions of the weak Laplacian operator and weak gradient operator. We also present WG finite element schemes for the singularly perturbed value problem. In Section 4, we introduce some local $L^2$ projection operators and give some approximation properties. In Section 5, we establish error estimates for the WG scheme in a $H^2$-equivalent discrete norm. And in Section 6, we report the results of two numerical experiments.


\section{Preliminaries and notations}
Let $\mathcal{T}_{h}$ be a non-overlapping polygonal mesh of the domain $\Omega$. For each cell $T \in \mathcal{T}_{h}$, let $h_T$ denote its diameter and define the global mesh size as $\displaystyle h = \max_{T \in \mathcal{T}_h}h_T$. The area of $T$ is denoted by $|T|$, and $\partial T$ represents the set of all edges of $T$. Let $\mathcal{E}_h^0$ denote the set of all interior edges and $\mathcal{E}_h^b$ the set of all boundary edges satisfying $\mathcal{E}_h^b \subset \partial\Omega$. 

For an interior edge $e \in \mathcal{E}_{h}^{0}$ shared by two adjacent cells $T_{+}$ and $T_{-}$, let $\mathbf{n}_{e}$ be the unit normal vector pointing from $T_{+}$ to $T_{-}$. The average and jump of a function $v$ across $e$ are defined as
\begin{align*}
  &\{v\} = \frac{1}{2}(v_{+} + v_{-}), &&[v] = v_{+} - v_{-},
\end{align*}
where $v_{+}$ and $v_{-}$ denote the traces of $v$ on $e$ from $T_{+}$ and $T_{-}$, respectively. For a boundary edge $e \in \mathcal{E}_h^b$, these operators are defined as $$\{v\} = [v] = v.$$

For any integer $k \geq 2$, the local discrete weak function space on a cell $T$ is defined as 
\begin{equation*}
    W_{k}(T)=\left\{v_h=\{v_0, v_b, \mathbf{v}_g\}: v_0\in \mathbb{P}_{k}(T), v_b\in \mathbb{P}_{k}(e), \mathbf{v}_g\in [\mathbb{P}_{k}(e)]^{2}, e \subset \partial T \right\}.
\end{equation*}
The global weak finite element space is then defined by
$$V_h=\left\{v_h: v_h|_T\in W_k(T), \quad\forall T\in\mathcal{T}_{h}\right\}.$$
Let $V_{h}^{0}$ denote the subspace of $V_h$ consisting of functions with vanishing traces on the boundary 
$$V_{h}^{0}=\left\{v_h\in V_h, v_b|_e=0, \mathbf{v}_g\cdot\mathbf{n}|_e=0, e\subset\partial T\cap \partial\Omega\right\}.$$
Furthermore, define the space of interior functions as
$$\mathbb{V}_{h}^{I}=\left\{v_{0}: v_{0} \text{~is the interior component of~} v_{h} \in V_{h} \right\}.$$

For any $v \in V_{h} + H^{2}(\Omega)$, the discrete weak Hessian operator $D^2_{w, k}$ is defined on each cell $T$ as the unique polynomial in $[\mathbb{P}_{k}(T)]^{2\times 2}$ satisfying
\begin{align}\label{3.1}
  \left(D^2_{w,k}v, \widetilde{\varphi} \right)_T=\left(v_0, \nabla \cdot \nabla \cdot \widetilde{\varphi}\right)_T-\left\langle v_b, \nabla \cdot \widetilde{\varphi} \cdot\mathbf{n}\right\rangle+\left\langle \mathbf{v}_g, \widetilde{\varphi} \mathbf{n}\right\rangle, \quad\forall \widetilde{\varphi} \in [\mathbb{P}_k(T)]^{2\times 2},
\end{align}
where $\mathbf{n}$ denotes the unit outward normal vector to \( \partial T \). Similarly, the discrete weak gradient $\nabla_{w,k} v$ is defined as the unique polynomial in $[\mathbb{P}_k(T)]^2$ such that
\begin{align}\label{3.2}
  \left(\nabla_{w,k}v, \mathbf{q}\right)_T=-\left(v_0, \nabla\cdot \mathbf{q}\right)_T+\left\langle v_b, \mathbf{q}\cdot\mathbf{n}\right\rangle_{\partial T}, \quad\forall \mathbf{q}\in \left[\mathbb{Q}_k(T)\right]^{2}. 
\end{align} 
For notational simplicity, and when no confusion may arise, we omit the subscript $k$ in $D^2_{w,k}$ and $\nabla_{w,k}$, denoting them simply as $D^2_w$ and $\nabla_w$, respectively. 

For each cell $T$, let $\mathcal{Q}_0$ denote the $L^2$-projection onto $\mathbb{P}_k(T)$. On each edge $e \subset \partial T$, define the edge-based $L^2$-projections $\mathcal{Q}_b$ onto $\mathbb{P}_k(e)$ and $\mathbf{Q}_g$ onto $[\mathbb{P}_k(e)]^2$. In addition, let $\mathbf{Q}_h$ and $\mathbb{Q}_h$ be the local $L^2$-projection operators onto $[\mathbb{P}_k(T)]^2$ and $[\mathbb{P}_k(T)]^{2 \times 2}$, respectively. We then define a projection $\mathcal{Q}_h u$ into the finite element space $V_h$ component-wise on each element $T$ as
\begin{align*}
  \mathcal{Q}_h u=\left\{\mathcal{Q}_0 u, \mathcal{Q}_b u, \mathbf{Q}_g(\nabla u)\right\}.
\end{align*}
\begin{lemma} \label{lemma2.1}
  On each cell $T \in \mathcal{T}_{h}$, the following identity holds for all $v \in H^2(T)$,
  \begin{align}\label{4.1}
    D^2_{w} v=\mathbb{Q}_h(D^2 v).
  \end{align}
\end{lemma}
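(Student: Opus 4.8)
The plan is to verify the defining relation \eqref{3.1} for the polynomial $\mathbb{Q}_h(D^2 v)$ directly, and then invoke uniqueness. Since $D^2_w v$ is by definition the \emph{unique} element of $[\mathbb{P}_k(T)]^{2\times 2}$ satisfying
\begin{align*}
  \left(D^2_{w}v, \widetilde{\varphi}\right)_T=\left(v_0, \nabla\cdot\nabla\cdot\widetilde{\varphi}\right)_T-\left\langle v_b, \nabla\cdot\widetilde{\varphi}\cdot\mathbf{n}\right\rangle+\left\langle \mathbf{v}_g, \widetilde{\varphi}\mathbf{n}\right\rangle
\end{align*}
for every $\widetilde{\varphi}\in[\mathbb{P}_k(T)]^{2\times2}$, it suffices to show that $\mathbb{Q}_h(D^2 v)$ satisfies the same identity when the weak function is the projection $\mathcal{Q}_h v=\{\mathcal{Q}_0 v,\mathcal{Q}_b v,\mathbf{Q}_g(\nabla v)\}$, since the statement concerns $v\in H^2(T)$ viewed inside $V_h+H^2(\Omega)$ where $v_0=\mathcal{Q}_0 v$, $v_b=\mathcal{Q}_b v$, $\mathbf{v}_g=\mathbf{Q}_g(\nabla v)$ (this is how $D^2_w$ acts on $H^2$ functions via their projections; one should state this convention explicitly at the start of the proof).

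First I would start from the left-hand side: because $\mathbb{Q}_h$ is the $L^2$-projection onto $[\mathbb{P}_k(T)]^{2\times2}$ and $\widetilde{\varphi}$ already lies in that space, $\left(\mathbb{Q}_h(D^2 v),\widetilde{\varphi}\right)_T=\left(D^2 v,\widetilde{\varphi}\right)_T$. Then I would integrate by parts twice on $\left(D^2 v,\widetilde{\varphi}\right)_T$ to move both derivatives off $v$: the first integration by parts gives $-\left(\nabla v,\nabla\cdot\widetilde{\varphi}\right)_T+\left\langle \nabla v,\widetilde{\varphi}\mathbf{n}\right\rangle_{\partial T}$, and the second gives $\left(v,\nabla\cdot\nabla\cdot\widetilde{\varphi}\right)_T-\left\langle v,\nabla\cdot\widetilde{\varphi}\cdot\mathbf{n}\right\rangle_{\partial T}+\left\langle\nabla v,\widetilde{\varphi}\mathbf{n}\right\rangle_{\partial T}$. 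This matches the right-hand side of \eqref{3.1} \emph{before} projections are inserted, with $v_0\leftrightarrow v$, $v_b\leftrightarrow v|_{\partial T}$, $\mathbf{v}_g\leftrightarrow\nabla v|_{\partial T}$.

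Next I would insert the projections, using that each divergence/normal-trace factor appearing against $v$, $v|_{\partial T}$, $\nabla v|_{\partial T}$ is itself a polynomial of degree $\le k$ on $T$ or on $e$: specifically $\nabla\cdot\nabla\cdot\widetilde{\varphi}\in\mathbb{P}_{k-2}(T)\subset\mathbb{P}_k(T)$, $\nabla\cdot\widetilde{\varphi}\cdot\mathbf{n}\in\mathbb{P}_{k-1}(e)\subset\mathbb{P}_k(e)$, and $\widetilde{\varphi}\mathbf{n}\in[\mathbb{P}_k(e)]^2$ on each edge $e$. Hence $\left(v,\nabla\cdot\nabla\cdot\widetilde{\varphi}\right)_T=\left(\mathcal{Q}_0 v,\nabla\cdot\nabla\cdot\widetilde{\varphi}\right)_T$, $\left\langle v,\nabla\cdot\widetilde{\varphi}\cdot\mathbf{n}\right\rangle_{\partial T}=\left\langle\mathcal{Q}_b v,\nabla\cdot\widetilde{\varphi}\cdot\mathbf{n}\right\rangle_{\partial T}$, and $\left\langle\nabla v,\widetilde{\varphi}\mathbf{n}\right\rangle_{\partial T}=\left\langle\mathbf{Q}_g(\nabla v),\widetilde{\varphi}\mathbf{n}\right\rangle_{\partial T}$, by the defining property of the $L^2$-projections. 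Substituting these identities shows that $\mathbb{Q}_h(D^2 v)$ satisfies exactly the relation \eqref{3.1} defining $D^2_w v$, so by uniqueness $D^2_w v=\mathbb{Q}_h(D^2 v)$.

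I do not expect a genuine obstacle here; this is a standard commutativity-type lemma for weak differential operators. The only points requiring care are bookkeeping: getting the two integration-by-parts steps right for the tensor/divergence notation (the identity $\left(D^2 v,\widetilde\varphi\right)_T$ unfolds as $\sum_{i,j}\left(\partial_i\partial_j v,\widetilde\varphi_{ij}\right)_T$, and one must track which index the divergence contracts), and confirming the degree counts so that every projection can be dropped without error. Making explicit the convention that for $v\in H^2(T)$ the weak function associated to $v$ has components $\{\mathcal{Q}_0 v,\mathcal{Q}_b v,\mathbf{Q}_g(\nabla v)\}$ (equivalently that traces of $v$ and $\nabla v$ may be used directly since only their projections enter) is the one place where sloppiness could cause confusion, so I would address it up front.
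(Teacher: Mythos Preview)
Your proposal is correct and follows essentially the same route as the paper: integrate by parts twice to identify $(D^2 v,\widetilde{\varphi})_T$ with the right-hand side of \eqref{3.1}, then use the projection property and uniqueness. The only difference is a convention---the paper embeds $v\in H^2(T)$ into the weak space as $\{v,\,v|_{\partial T},\,\nabla v|_{\partial T}\}$ rather than $\{\mathcal{Q}_0 v,\,\mathcal{Q}_b v,\,\mathbf{Q}_g(\nabla v)\}$, so your degree-counting step to remove the projections is not needed there and the argument is correspondingly shorter.
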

\begin{proof}
  For any $\widetilde{\varphi} \in [\mathbb{P}_{k}(T)]^{2\times 2}$, it follows that
  \begin{align*}
    \left(D^2_{w} v,\widetilde{\varphi}\right)_{T}&=\left(v, \nabla\cdot\nabla\cdot \widetilde{\varphi} \right)_{T}-\left\langle v,\nabla\cdot \widetilde{\varphi} \cdot\mathbf{n}\right\rangle_{\partial T}+\left\langle \nabla v, \widetilde{\varphi} \mathbf{n} \right\rangle_{\partial T}\\
    &=\left(D^2 v, \widetilde{\varphi}\right)_{T}\\
    &=\left(\mathbb{Q}_{h}(D^2 v), \widetilde{\varphi}\right)_{T}.
  \end{align*}
  Since the equality holds for all $\widetilde{\varphi} \in [\mathbb{P}_{k}(T)]^{2\times 2}$, we conclude that
  \begin{align*}
    D^2_{w} v=\mathbb{Q}_h(D^2 v),
  \end{align*}
  which completes the proof.
\end{proof}
\begin{lemma} \label{lemma2.2}
  On each cell $T \in \mathcal{T}_{h}$, the following identity holds for all $v \in H^2(T)$, 
  \begin{align}\label{4.2}
    \nabla_{w} v=\mathbf{Q}_{h}(\nabla v).
  \end{align}
\end{lemma}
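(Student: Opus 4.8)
The plan is to mirror the proof of Lemma \ref{lemma2.1}, since the definition \eqref{3.2} of the discrete weak gradient is structurally the same integration-by-parts identity, just one order lower. First I would take an arbitrary test function $\mathbf{q} \in [\mathbb{P}_k(T)]^2$ (the paper writes $[\mathbb{Q}_k(T)]^2$ in \eqref{3.2}, but for the present purpose it is the same polynomial space of degree $k$) and expand $\left(\nabla_w v, \mathbf{q}\right)_T$ using \eqref{3.2}. Since $v \in H^2(T)$, its interior and boundary components in the formula are just the traces of $v$ itself, i.e. $v_0 = v|_T$ and $v_b = v|_{\partial T}$, so the right-hand side becomes $-\left(v, \nabla\cdot\mathbf{q}\right)_T + \left\langle v, \mathbf{q}\cdot\mathbf{n}\right\rangle_{\partial T}$.

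Next I would apply the ordinary integration-by-parts (Green's) formula on the cell $T$: for $v \in H^2(T) \subset H^1(T)$ and $\mathbf{q}$ smooth,
\begin{align*}
  -\left(v, \nabla\cdot\mathbf{q}\right)_T + \left\langle v, \mathbf{q}\cdot\mathbf{n}\right\rangle_{\partial T} = \left(\nabla v, \mathbf{q}\right)_T.
\end{align*}
Then, since $\mathbf{q} \in [\mathbb{P}_k(T)]^2$ and $\mathbf{Q}_h$ is the $L^2$-projection onto $[\mathbb{P}_k(T)]^2$, I have $\left(\nabla v, \mathbf{q}\right)_T = \left(\mathbf{Q}_h(\nabla v), \mathbf{q}\right)_T$. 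Chaining these equalities gives $\left(\nabla_w v, \mathbf{q}\right)_T = \left(\mathbf{Q}_h(\nabla v), \mathbf{q}\right)_T$ for every $\mathbf{q} \in [\mathbb{P}_k(T)]^2$. Since both $\nabla_w v$ and $\mathbf{Q}_h(\nabla v)$ lie in $[\mathbb{P}_k(T)]^2$, testing against all such $\mathbf{q}$ forces $\nabla_w v = \mathbf{Q}_h(\nabla v)$, which is \eqref{4.2}.

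There is no real obstacle here; the only minor point to be careful about is the regularity/trace issue — one needs $v \in H^2(T)$ (or at least $H^1(T)$ with a well-defined trace on $\partial T$) for the boundary term $\left\langle v, \mathbf{q}\cdot\mathbf{n}\right\rangle_{\partial T}$ and the Green's formula to be valid, which is exactly the hypothesis stated. I would also note in passing the harmless notational discrepancy between $\mathbb{P}_k$ and $\mathbb{Q}_k$ in \eqref{3.2} so the projection onto $[\mathbb{P}_k(T)]^2$ is the correct object to match against. The write-up should be three or four lines, parallel in structure to the proof of Lemma \ref{lemma2.1}.
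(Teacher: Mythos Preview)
Your proposal is correct and follows exactly the same approach as the paper's proof: expand $(\nabla_w v,\mathbf{q})_T$ via the definition \eqref{3.2} with $v_0=v$ and $v_b=v|_{\partial T}$, integrate by parts to obtain $(\nabla v,\mathbf{q})_T$, invoke the defining property of the projection $\mathbf{Q}_h$, and conclude by testing against all $\mathbf{q}\in[\mathbb{P}_k(T)]^2$. Your remark on the $\mathbb{P}_k$/$\mathbb{Q}_k$ notation is apt but, as you say, immaterial to the argument.
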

\begin{proof}
  For any $\mathbf{q} \in [\mathbb{P}_{k}(T)]^2$, applying integration by parts yields
  \begin{align*}
    \left(\nabla_{w} v, \mathbf{q}\right)_{T}&=-\left(v, \nabla\cdot \mathbf{q}\right)_{T}+\left\langle v, \mathbf{q}\cdot\mathbf{n}\right\rangle_{\partial T}\\
    &=\left(\nabla v, \mathbf{q}\right)_{T}\\
    &=\left(\mathbf{Q}_{h}\nabla v, \mathbf{q}\right)_{T}.
  \end{align*}
  Since the equality holds for all $\mathbf{q} \in [\mathbb{P}_{k}(T)]^2$, we conclude that
  \begin{align*}
    \nabla_{w} v=\mathbf{Q}_{h}(\nabla v),
  \end{align*}
  which completes the proof.
\end{proof}

\section{Numerical algorithm}
For notational simplicity, we employ the following conventions
\begin{align*}
  &\left(D^2_{w}u_h, D^2_{w}v_h\right)_{\mathcal{T}_{h}}=\sum_{T\in\mathcal{T}_h}\left(D^2_{w}u_h,  D^2_{w}v_h\right)_T,
  &&\left\Vert D^2_{w}u_h \right\Vert_{\mathcal{T}_{h}}^{2}=\sum_{T\in\mathcal{T}_h}\left\Vert D^2_{w}u_h \right\Vert_T^{2},\\
  &\left(\nabla_{w}u_h, \nabla_{w}v_h\right)_{\mathcal{T}_{h}}=\sum_{T\in\mathcal{T}_h}\left(\nabla_{w}u_h, \nabla_{w}v_h\right)_T,
  &&\left\Vert \nabla_{w}u_h \right\Vert_{\mathcal{T}_{h}}^{2}=\sum_{T\in\mathcal{T}_h}\left\Vert \nabla_{w}u_h \right\Vert_T^{2}.
\end{align*}
The weak Galerkin scheme is then formulated as follows. Find $u_h \in V_{h}^{0}$ such that 
\begin{align}\label{3.3}
  B_{h}(u_{h}, v_{h}) + A_{h}(u_{h}, v_{h}) = (f, v_{0}), &&\forall v_{h} \in V_h^0,
\end{align}
where 
\begin{align*}
    &B_{h}(u_{h}, v_{h}) = \varepsilon^{2}(D^2_{w}u_{h}, D^2_{w}v_h)_{\mathcal{T}_{h}}+S_{1}(u_{h},v_{h}),\\
    &A_{h}(u_{h}, v_{h}) = (\nabla_{w}u_{h},\nabla_{w}v_{h})_{\mathcal{T}_{h}}+S_{2}(u_{h},v_{h}),
\end{align*}
and the stabilizer terms are given by
\begin{align*}
  &S_{1}\left(u_h,v_{h}\right) = \sum_{T\in\mathcal{T}_h} \left(\varepsilon^{2} h_{T}^{-1}\left\langle\nabla u_0-\mathbf{u}_g,\nabla v_0-\mathbf{v}_g\right\rangle_{\partial T}+\varepsilon^{2}h_{T}^{-3} \left\langle u_0-u_b,v_0-v_b\right\rangle_{\partial T} \right),\\
  &S_{2}\left(u_h,v_{h}\right) = \sum_{T\in\mathcal{T}_h} \left(h_{T}\left\langle\nabla u_0-\mathbf{u}_g,\nabla v_0-\mathbf{v}_g\right\rangle_{\partial T}+h_{T}^{-1} \left\langle u_0-u_b,v_0-v_b\right\rangle_{\partial T} \right).
\end{align*}
We endow the WG space $V_{h}$ with an $H^2$-like seminorm defined by
\begin{align}\label{3.4}
    \3bar v_{h} \3bar^2 = B_{h}(v_{h}, v_{h}) + A_{h}(v_{h}, v_{h}).
\end{align}
\begin{lemma}
  The quantity $\3bar \cdot \3bar$ defines a norm in $V^0_h$, and the weak Galerkin scheme (\ref{3.3}) has a unique solution.
\end{lemma}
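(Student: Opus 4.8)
The plan is to check the three norm axioms on $V_h^0$ and then read off unique solvability of (\ref{3.3}) from positive definiteness. Because $B_h$ and $A_h$ are symmetric bilinear forms and $\3bar v_h\3bar^2 = B_h(v_h,v_h)+A_h(v_h,v_h)$ is a sum of squared $L^2$-norms of $\nabla_w v_h$ and $D^2_w v_h$ together with boundary residual terms carrying strictly positive weights, nonnegativity $\3bar v_h\3bar\ge 0$ and absolute homogeneity $\3bar\alpha v_h\3bar = |\alpha|\,\3bar v_h\3bar$ are immediate, and the triangle inequality follows from the Cauchy--Schwarz inequality for the inner product $B_h(\cdot,\cdot)+A_h(\cdot,\cdot)$. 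So the only real work is positive definiteness.

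To get positive definiteness, I would assume $v_h=\{v_0,v_b,\mathbf v_g\}\in V_h^0$ with $\3bar v_h\3bar=0$ and argue that every term in $B_h(v_h,v_h)+A_h(v_h,v_h)$ must vanish. In particular $\nabla_w v_h=0$ on each $T\in\mathcal T_h$, and the stabilizer $S_2$ forces $v_0=v_b$ and $\nabla v_0=\mathbf v_g$ on $\partial T$. Testing the defining relation (\ref{3.2}) of $\nabla_w v_h$ against a polynomial $\mathbf q$ and integrating by parts gives $0=(\nabla_w v_h,\mathbf q)_T=(\nabla v_0,\mathbf q)_T-\langle v_0-v_b,\mathbf q\cdot\mathbf n\rangle_{\partial T}=(\nabla v_0,\mathbf q)_T$; choosing $\mathbf q=\nabla v_0$ shows $\nabla v_0\equiv 0$, so $v_0$ is constant on each cell. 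Since $v_0=v_b$ on $\partial T$ and $v_b$ is single-valued on interior edges, these cellwise constants coincide across shared edges, and connectedness of $\Omega$ forces $v_0$ to be one global constant $c$; on boundary edges $v_b=0$ by the definition of $V_h^0$, hence $c=0$. Then $v_0=0$, $v_b=0$, and $\mathbf v_g=\nabla v_0|_{\partial T}=0$, i.e. $v_h=0$. Note only the $A_h$-part of the norm is used here; the $\varepsilon^2$-terms from $B_h$ are not needed.

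Finally, for (\ref{3.3}) I would set $a(u_h,v_h):=B_h(u_h,v_h)+A_h(u_h,v_h)$, which is a symmetric bilinear form on the finite-dimensional space $V_h^0$ with $a(v_h,v_h)=\3bar v_h\3bar^2$, hence positive definite by the step above, while $v_h\mapsto(f,v_0)$ is a bounded linear functional. Since (\ref{3.3}) is a square linear system, it suffices to establish uniqueness: if $u_h^{(1)}$ and $u_h^{(2)}$ both solve (\ref{3.3}), then $w_h:=u_h^{(1)}-u_h^{(2)}$ satisfies $a(w_h,v_h)=0$ for all $v_h\in V_h^0$, and taking $v_h=w_h$ yields $\3bar w_h\3bar^2=0$, so $w_h=0$; equivalently, the stiffness matrix of $a$ is symmetric positive definite and therefore invertible, which gives both existence and uniqueness.

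The main obstacle is the positive-definiteness step, and within it the propagation argument turning ``$v_0$ is cellwise constant'' into ``$v_0\equiv 0$'': this requires using the single-valuedness of $v_b$ on interior edges, the identity $v_0=v_b$ on $\partial T$, the connectedness of $\Omega$, and finally the homogeneous boundary constraint $v_b|_{\partial\Omega}=0$ encoded in $V_h^0$. The remaining manipulations are routine.
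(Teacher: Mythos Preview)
Your proof is correct, but the positive-definiteness step takes a different route from the paper. The paper works through the weak Hessian: from $D^2_w v_h=0$ together with $v_0=v_b$ and $\nabla v_0=\mathbf v_g$ on $\partial T$, it integrates (\ref{3.1}) by parts to obtain $D^2 v_0=0$ on each $T$, so $v_0$ is piecewise \emph{linear}; it then propagates $\nabla v_0$ to zero via the single-valuedness of $\mathbf v_g$ and the boundary constraint, and only afterwards propagates $v_0$ to zero via $v_b$. You instead use the weak gradient: from $\nabla_w v_h=0$ and $v_0=v_b$ you get $(\nabla v_0,\mathbf q)_T=0$ directly, hence $v_0$ is piecewise \emph{constant} in one step, and the propagation through $v_b$ finishes. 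Your route is shorter and, as you observe, shows that the $A_h$-part alone already furnishes a norm on $V_h^0$, which is a cleaner statement in the singularly perturbed regime $\varepsilon\to 0$; the paper's argument exercises the fourth-order machinery but needs both stabilizer identities and a two-stage propagation. The uniqueness/existence argument for (\ref{3.3}) is the same in both.
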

\begin{proof}
  To verify that $\3bar \cdot \3bar$ defines a norm on $V_h^0$, suppose $v_h \in V_h^0$ satisfies $\3bar v_{h} \3bar = 0$. It follows that $D^2_w v_h = 0$ and $\nabla_w v_h = 0$ on each cell $T$, together with the conditions $v_0 = v_b$ and $\nabla v_0 = \mathbf{v}_g$ on $\partial T$. Now, for any $\widetilde{\varphi} \in [\mathbb{P}_k(T)]^{2 \times 2}$, applying definition (\ref{3.1}) along with $D^2_w v_h = 0$, we obtain 
  \begin{equation}\label{3.5}
    \begin{split}
      0 &= \left(D^2_{w}v_{h},\widetilde{\varphi}\right)_T\\
        &= \left(v_0, \nabla\cdot\nabla\cdot\widetilde{\varphi}\right)_T-\left\langle v_b, \nabla\cdot\widetilde{\varphi} \cdot\mathbf{n}\right\rangle_{\partial T}+\left\langle\mathbf{v}_g, \widetilde{\varphi} \mathbf{n}\right\rangle_{\partial T}\\
        &= \left(D^2 v_0, \widetilde{\varphi}\right)_T + \left\langle v_0-v_b, \nabla\cdot\widetilde{\varphi} \cdot\mathbf{n}\right\rangle_{\partial T} - \left\langle\nabla v_0 -\mathbf{v}_g, \widetilde{\varphi} \mathbf{n}\right\rangle_{\partial T}\\
        &= \left(D^2 v_0, \widetilde{\varphi}\right)_T,
  \end{split}
  \end{equation}
  which implies $D^2 v_0 = 0$ on each cell $T$. Hence, $v_0$ is a linear polynomial on $T$, and $\nabla v_0$ is constant per cell. Combining this with the condition  $\nabla v_0 = \mathbf{v}_g$ on $\partial T$, it follows that $\nabla v_0$ is continuous across the entire domain $\Omega$. Since $\mathbf{v}_g = 0$ on $\partial \Omega$, we conclude that $\nabla v_0 = 0$ in $\Omega$ and $\mathbf{v}_g = 0$ on every edge. Therefore, $v_0$ is constant on each cell. Using the condition $v_0 = v_b$ on $\partial T$, we deduce that $v_0$ is continuous throughout $\Omega$. The boundary condition $v_b = 0$ on $\partial \Omega$ then implies $v_0 = 0$ in $\Omega$ and $v_b = 0$ on all edges.

  Now, let $u_h^{(1)}$ and $u_h^{(2)}$ be two distinct solutions of the WG scheme (\ref{3.3}). Then, the error $u_h^{(1)} - u_h^{(2)}$ belongs to $V_h^0$ and satisfies 
  \begin{align}\label{unique}
    B_{h}(u_h^{(1)} - u_h^{(2)}, v_{h}) + A_{h}(u_h^{(1)} - u_h^{(2)}, v_{h}) = 0, \quad \forall v \in V_h^0.
  \end{align}
  By choosing $v_h = u_h^{(1)} - u_h^{(2)}$ in equation (\ref{unique}), it follows that 
  \begin{align*}
    \3bar u_h^{(1)} = u_h^{(2)} \3bar^{2} = 0.
  \end{align*}
  Since $\3bar \cdot \3bar$ defines a norm on $V_h^0$, the identity $\3bar u_h^{(1)} = u_h^{(2)} \3bar = 0$ implies $u_h^{(1)} - u_h^{(2)} \equiv 0$, and hence $u_h^{(1)} = u_h^{(2)}$. This establishes the uniqueness of the solution.
\end{proof}

\section{A posteriori error estimation}
In this section, we introduce a residual-based a posteriori error estimator for the singularly perturbed problem and establish its reliability and efficiency. First, we define an energy norm $\Vert \cdot \Vert_{\varepsilon}$, balanced with respect to the singular perturbation parameter $\varepsilon$, on the space $H_{0}^{2}(\Omega)$ as follows 
\begin{align*}
  \Vert w \Vert_{\varepsilon} = \left(\varepsilon^2 \vert w \vert_{2}^{2} + \vert w \vert_{1}^{2}\right)^{1/2}.
\end{align*}
We also define the localized version $\Vert \cdot \Vert_{\varepsilon,d}$ of this norm over a subdomain $d$, which may be an edge $e$ or a cell $T$, as needed in the analysis.

To define an a posteriori error estimator for the singularly perturbed problem, we introduce the following local error indicators. First, define the parameters
\begin{align*}
  &\alpha_{T}=\min\{\varepsilon^{-1} h_{T}^{2}, h_{T}\}, 
  &&\alpha_{e,1}=\min\{\varepsilon^{-1} h_{T}^{3/2}, h_{T}^{1/2}\}, 
  &&\alpha_{e,2}=\min\{\varepsilon^{-1} h_{T}^{1/2}, h_{T}^{-1/2}\}.
\end{align*}
The local residual and jump terms are given by
\begin{align*}
  &R_{T} = f_{h} - \nabla\cdot\nabla\cdot \varepsilon^2 D^{2}_{w} u_{h} + \nabla\cdot \nabla_{w}u_{h},\\
  &J_{e,1} = [(\nabla\cdot \varepsilon^2 D_{w}^{2}u_{h} - \nabla_{w}u_{h}) \cdot \mathbf{n}],\\
  &J_{e,2} = [\varepsilon^2 D_{w}^{2}u_{h} \mathbf{n}].
\end{align*}
Then, the local error indicators are defined as
\begin{align*}
  &\eta_{T,1}^2= \alpha_{T}^{2} \left\Vert f - f_{h} \right\Vert_{T}^{2}, &&\eta_{T,2}^2= \alpha_{T}^{2} \left\Vert R_{T} \right\Vert_{T}^{2},\\
  &\eta_{e,1}^2=\sum_{e\in \partial T} \alpha_{e,1}^{2} \left\Vert J_{e,1} \right\Vert_{e}^{2}, &&\eta_{e,2}^2=\sum_{e\in \partial T} \alpha_{e,2}^{2} \left\Vert J_{e,2} \right\Vert_{e}^{2}.
\end{align*}
The local and global error estimators are respectively defined as
\begin{align*}
  &\eta_{T}= \left(\sum_{i=1}^{2} \left( \eta_{T,i}^{2}+\eta_{e,i}^{2}+S_{i}(u_{h},u_{h})|_{T} \right)\right)^{1/2}, &&\eta_{h}=\left(\sum_{T\in\mathcal{T}_{h}}\eta_{T}^{2}\right)^{1/2}.
\end{align*}

\subsection{Upper bound}
Our a posteriori error analysis employs a recovery operator $E$, introduced in \cite{28Postertiori}, which maps the space $\mathbb{V}_{h}^{I}$ into a $C^1$-conforming space $\mathbb{V}_{h}^{C} \subset C^1(\mathcal{T}_{h})$ constructed via macro elements of degree $k+2$. For a cell $T \in \mathcal{T}_h$, the macro element $\widetilde{\mathbb{P}}_{m}$ is defined over a subdivision of $T$ into subtriangles $\kappa_{1}$, $\kappa_{2}$, $\cdots$, $\kappa_{s}$, 
\begin{align*}
  \widetilde{\mathbb{P}}_{m} = \left\{ v \in C^1(T): v|_{\kappa_i} \in \mathbb{P}_{m}(\kappa_i), i = 1, 2, \cdots, s \right\}.
\end{align*}
Further details can be found in \cite{28Postertiori}. Adapted to our method, the recovery operator satisfies the following estimate.
\begin{lemma}\label{ROP}
There exists an operator $E: \mathbb{V}_{h}^{I} \to \mathbb{V}_{h}^{C} \cap H_0^2(\Omega)$ such taht  
\begin{align}
  \sum_{T \in \mathcal{T}_h} |u_0 - E(u_0)|_{\alpha, T}^2 \leq C \sum_{e \in \mathcal{E}_h}\left(\Vert h_{e}^{1/2-\alpha}[u_0]\Vert_{e}^2 + \Vert h_{e}^{3/2-\alpha}[\nabla u_0]\Vert_{e} \right),
\end{align}
for $\alpha = 0$, $1$, $2$, where $C>0$ is a constant independent of $h_{e}$ and $u_0$.
\end{lemma}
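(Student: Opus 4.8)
The plan is to adapt the averaging-based enriching operator of \cite{28Postertiori} to the interior components of weak Galerkin functions, and then to track the powers of the local mesh size carefully. Let $\{N_i\}_i$ be the nodal functionals that determine a function of the $C^1$ macro-element space $\mathbb{V}_h^C$ of degree $k+2$: the point values and gradients at the mesh vertices, the edge functionals (moments of the trace and of the normal derivative along each edge) prescribed by the construction in \cite{28Postertiori}, and the interior moments on each cell. For a functional $N_i$ whose supporting mesh entity lies in the interior of $\Omega$ and is shared by the cells $T_1,\dots,T_{m_i}$, I would set
\begin{align*}
  N_i\big(E(u_0)\big) = \frac{1}{m_i}\sum_{j=1}^{m_i} N_i\big(u_0|_{T_j}\big),
\end{align*}
while for a functional supported on $\partial\Omega$ I would set $N_i(E(u_0))=0$. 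Since $\mathbb{V}_h^C\subset C^1(\mathcal{T}_h)\subset H^2(\Omega)$ and the elements of $\mathbb{V}_h^C$ with vanishing boundary value and normal derivative lie in $H_0^2(\Omega)$, this produces $E(u_0)\in\mathbb{V}_h^C\cap H_0^2(\Omega)$. With the convention $[v]=v$ on boundary edges, the interior and boundary cases can then be handled uniformly.

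Next, fix a cell $T$ and set $w_T:=(u_0-E(u_0))|_T$. Because $\mathbb{P}_k(T)\subset\widetilde{\mathbb{P}}_{k+2}$, the function $w_T$ lies in the finite-dimensional macro-polynomial space on $T$, and $N_i(w_T)=0$ for every interior moment $N_i$ of $T$ (these are single-valued and untouched by the averaging). Transforming to a reference macro element, using equivalence of norms on a finite-dimensional space, and scaling back affinely, one obtains for $\alpha=0,1,2$ an estimate of the form
\begin{align*}
  |w_T|_{\alpha,T}^2 \leq C\, h_T^{2-2\alpha}\Big(\sum_{p\in\mathcal{V}(T)}\big(|w_T(p)|^2 + h_T^2\,|\nabla w_T(p)|^2\big) + \sum_{e\subset\partial T} h_T\,|N_e(w_T)|^2\Big),
\end{align*}
where $\mathcal{V}(T)$ is the vertex set of $T$ and the $N_e$ run over the appropriately $h_T$-rescaled edge functionals; higher-order edge functionals enter with correspondingly higher powers of $h_T$ and are treated the same way.

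It then remains to bound the right-hand side by edge jumps. Writing each difference $N_i(w_T)=N_i(u_0|_T)-N_i(E(u_0))$ as an average of pairwise differences $N_i(u_0|_T)-N_i(u_0|_{T'})$ over cells $T'$ sharing the supporting entity of $N_i$, every such pairwise difference is a pointwise value, or an edge moment, of the jump $[u_0]$ or $[\nabla u_0]$ across a common edge $e$. The polynomial inverse/trace inequality on $e$, together with $h_e\simeq h_T$ from shape regularity, gives $|[u_0](p)|\leq C h_e^{-1/2}\Vert[u_0]\Vert_e$, $|[\nabla u_0](p)|\leq C h_e^{-1/2}\Vert[\nabla u_0]\Vert_e$, and the analogous bounds for the edge functionals. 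Substituting into the preceding display and collecting powers of $h_e$, the vertex-value terms contribute $h_e^{1-2\alpha}\Vert[u_0]\Vert_e^2=\Vert h_e^{1/2-\alpha}[u_0]\Vert_e^2$, while the vertex-gradient and edge terms contribute $h_e^{3-2\alpha}\Vert[\nabla u_0]\Vert_e^2=\Vert h_e^{3/2-\alpha}[\nabla u_0]\Vert_e^2$, which are precisely the weights in the statement. Summing over $T\in\mathcal{T}_h$ and using the finite overlap of the cell patches closes the argument.

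\textbf{Main obstacle.} The conceptual skeleton is standard, so the delicate part is the bookkeeping. One must check that the degree-$(k+2)$ macro-element degrees of freedom of \cite{28Postertiori} involve only the function and its first derivatives on mesh entities — so that the recovery difference never produces a jump of a second- or higher-order derivative, which the right-hand side could not control — that the interior moments genuinely drop out of the averaging, and that the affine rescaling of each functional combines with the polynomial inverse inequality to yield exactly the exponents $1/2-\alpha$ and $3/2-\alpha$ for all three values $\alpha=0,1,2$ at once. These are the points requiring care; the remainder is routine.
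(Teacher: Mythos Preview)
Your proposal is correct, and in fact goes well beyond what the paper itself does: the paper does not prove this lemma at all. It introduces the macro-element space $\mathbb{V}_h^C$, states the estimate, and defers entirely to \cite{28Postertiori} with the remark ``Further details can be found in \cite{28Postertiori}.'' So there is no proof in the paper to compare against; your sketch is essentially a reconstruction of the argument in the cited reference, specialized to the interior components $u_0$ of weak Galerkin functions.

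The averaging construction, the scaling argument on the reference macro element, and the reduction of the nodal discrepancies to edge jumps via polynomial inverse inequalities are all the standard ingredients of the enriching-operator estimates in that literature, and you have tracked the $h_e$-exponents correctly. One remark on your ``Main obstacle'': even if the degree-$(k+2)$ macro element carries second-derivative degrees of freedom at vertices, the argument still closes. Such a functional enters the scaled norm on $T$ with weight $h_T^{4}$ (relative to point values), and the pointwise jump $[D^2 u_0](p)$ is controlled by $h_e^{-1/2}\Vert[D^2 u_0]\Vert_e \leq C h_e^{-3/2}\Vert[\nabla u_0]\Vert_e$ via an inverse inequality along $e$; the product again yields $h_e^{3-2\alpha}\Vert[\nabla u_0]\Vert_e^2$. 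So the exponent bookkeeping is robust, and your concern, while prudent, does not pose a genuine obstruction.
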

Let $u_{c} = E(u_{0})$. we decompose the error as follows
\begin{align}
  e_{h} = u - u_h = (u - u_{c}) + (u_{c} - u_h) = e_c + e_d.
\end{align}
\begin{lemma}\label{uplemma4.2} 
  Let $u\in H^2_0(\Omega)$ and $u_h \in V_{h}^{0}$ be the solutions to (\ref{1.1})-(\ref{1.2}) and (\ref{3.3}), respectively. Then we have
  \begin{align}
    l_{1}(u_{h}) \leq& C \eta_{h} \Vert e_c \Vert_{\varepsilon},
  \end{align}
  where
  \begin{align*}
    l_{1}(u_{h}) = \varepsilon^{2} \left( D^{2} u - D_{w}^{2}u_{h}, D^{2} u - D_{w}^{2}u_c \right)_{\mathcal{T}_{h}} + \left( \nabla u - \nabla_{w}u_{h}, \nabla u - \nabla_{w}u_c \right)_{\mathcal{T}_{h}}.
  \end{align*}
\end{lemma}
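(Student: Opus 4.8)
The plan is to write $l_{1}(u_{h})$ as a residual functional evaluated at the conforming part $e_c=u-u_c$ of the error, and then to bound every contribution by the balanced norm $\Vert e_c\Vert_{\varepsilon}$. Since $u\in H_0^2(\Omega)$ and $u_c=E(u_0)\in H_0^2(\Omega)$ are cell-wise $H^2$ functions, Lemmas~\ref{lemma2.1} and \ref{lemma2.2} give $D^2_wu_c=\mathbb{Q}_h(D^2u_c)$, $\nabla_wu_c=\mathbf{Q}_h(\nabla u_c)$, and likewise for $u$; exploiting the $L^2$-orthogonality of $\mathbb{Q}_h,\mathbf{Q}_h$ against the cell-wise polynomials $D^2_wu_h$ and $\nabla_wu_h$, one rewrites
\[
  l_{1}(u_{h})=\varepsilon^{2}\big(D^{2}u-D^{2}_{w}u_{h},\,D^{2}e_c\big)_{\mathcal{T}_{h}}+\big(\nabla u-\nabla_{w}u_{h},\,\nabla e_c\big)_{\mathcal{T}_{h}}.
\]

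Next I would remove the exact solution using the continuous problem. Testing the variational form of \eqref{1.1}--\eqref{1.2} with $e_c\in H_0^2(\Omega)$ gives $\varepsilon^2(D^2u,D^2e_c)_{\Omega}+(\nabla u,\nabla e_c)_{\Omega}=(f,e_c)_{\Omega}$, so that, after inserting the piecewise-polynomial $L^2$-projection $f_h$ of $f$,
\[
  l_{1}(u_{h})=(f-f_{h},e_c)_{\mathcal{T}_{h}}+(f_{h},e_c)_{\mathcal{T}_{h}}-\varepsilon^{2}\big(D^{2}_{w}u_{h},D^{2}e_c\big)_{\mathcal{T}_{h}}-\big(\nabla_{w}u_{h},\nabla e_c\big)_{\mathcal{T}_{h}}.
\]
On each $T$ I would integrate by parts twice in $(D^{2}_{w}u_{h},D^{2}e_c)_T$ and once in $(\nabla_{w}u_{h},\nabla e_c)_T$, moving all derivatives onto the polynomial fields $D^2_wu_h,\nabla_wu_h$; summing over $\mathcal{T}_h$, the volume terms combine with $(f_h,e_c)_{\mathcal{T}_h}$ into $\sum_T(R_T,e_c)_T$, while (using that $u_c$ is $C^1$-conforming, so that $e_c$ and $\nabla e_c$ are single-valued on interior edges and vanish on $\partial\Omega$) the cell-boundary contributions of adjacent elements assemble into the jumps. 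The outcome is
\[
  l_{1}(u_{h})=\sum_{T\in\mathcal{T}_{h}}(f-f_{h},e_c)_{T}+\sum_{T\in\mathcal{T}_{h}}(R_{T},e_c)_{T}+\sum_{e\in\mathcal{E}_{h}^{0}}(J_{e,1},e_c)_{e}-\sum_{e\in\mathcal{E}_{h}^{0}}(J_{e,2},\nabla e_c)_{e}.
\]

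It then remains to apply Cauchy--Schwarz to the four sums and to control the $e_c$-factors by $\Vert e_c\Vert_{\varepsilon}$. Using $e_c\in H_0^2(\Omega)$ (possibly after subtracting a suitable quasi-interpolant and handling the remainder with the discrete equation \eqref{3.3}), a Poincaré-/Friedrichs-type argument on the element patch $\omega_T$ of $T$ gives $\Vert e_c\Vert_{T}\le C\min\{h_T,\varepsilon^{-1}h_T^2\}\,\Vert e_c\Vert_{\varepsilon,\omega_T}=C\,\alpha_T\,\Vert e_c\Vert_{\varepsilon,\omega_T}$: one bounds $\Vert e_c\Vert_T$ either by $h_T|e_c|_{1,\omega_T}$ or by $h_T^2|e_c|_{2,\omega_T}$ and compares each with the matching part of $\Vert\cdot\Vert_{\varepsilon}$, retaining the smaller—whence the $\min$. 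Scaled trace inequalities together with the same dichotomy give $\Vert e_c\Vert_{e}\le C\,\alpha_{e,1}\,\Vert e_c\Vert_{\varepsilon,\omega_e}$ and $\Vert\nabla e_c\Vert_{e}\le C\,\alpha_{e,2}\,\Vert e_c\Vert_{\varepsilon,\omega_e}$. Substituting these, recognising the indicators $\eta_{T,1},\eta_{T,2},\eta_{e,1},\eta_{e,2}$, and summing over all cells and edges (finite patch overlap) yields $l_{1}(u_{h})\le C\,\eta_{h}\,\Vert e_c\Vert_{\varepsilon}$.

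The crux of the proof is this last step: extracting the \emph{robust} weights $\alpha_T,\alpha_{e,1},\alpha_{e,2}$ forces one to use the $H^1$- and the $\varepsilon$-weighted $H^2$-content of $e_c$ simultaneously, so that the bounds neither blow up as $\varepsilon\to0$ nor on the thin Shishkin layers—this is exactly what generates the $\min\{\cdot,\cdot\}$ structure and distinguishes the analysis from the standard fourth-order a posteriori theory. A lesser but still delicate point is the reduction in the first paragraph, where the orthogonality of the $L^2$-projections must be exploited so that only the genuine derivatives of $e_c$ survive in the pairings.
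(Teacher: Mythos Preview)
Your overall route—rewrite $l_1(u_h)$ as a residual functional acting on $e_c$, integrate by parts to produce $R_T$, $J_{e,1}$, $J_{e,2}$, and then bound term by term—is the same as the paper's. The gap is in the last step.

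The inequalities you need, $\Vert e_c\Vert_T\le C\,\alpha_T\,\Vert e_c\Vert_{\varepsilon,\omega_T}$, $\Vert e_c\Vert_e\le C\,\alpha_{e,1}\,\Vert e_c\Vert_{\varepsilon,\omega_e}$ and $\Vert\nabla e_c\Vert_e\le C\,\alpha_{e,2}\,\Vert e_c\Vert_{\varepsilon,\omega_e}$, are simply false for a generic $e_c\in H_0^2(\Omega)$: on an interior patch $\omega_T$ the function $e_c$ has no reason to have vanishing mean, so no local Poincar\'e/Friedrichs argument can convert $\Vert e_c\Vert_T$ into $h_T|e_c|_{1,\omega_T}$ or $h_T^2|e_c|_{2,\omega_T}$. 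You acknowledge this parenthetically (``possibly after subtracting a suitable quasi-interpolant and handling the remainder with the discrete equation~\eqref{3.3}''), but that parenthesis \emph{is} the proof. In the paper one inserts a continuous piecewise-polynomial interpolant $Ie_c$, regarded as an element of $V_h^0$ via the triple $\{Ie_c,Ie_c,\{\nabla Ie_c\}\}$, and tests the WG scheme~\eqref{3.3} with it. The residual terms then carry $e_c-Ie_c$ rather than $e_c$, and for $e_c-Ie_c$ the local bounds with the weights $\alpha_T,\alpha_{e,1},\alpha_{e,2}$ \emph{do} hold, by standard interpolation estimates combined with the $\min\{\cdot,\cdot\}$ dichotomy you describe.

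What your proposal misses entirely is that testing~\eqref{3.3} with $Ie_c$ produces the stabilizer contributions $S_1(u_h,Ie_c)$ and $S_2(u_h,Ie_c)$. These do not vanish (the interpolant is only $C^0$, so $\nabla Ie_c-\{\nabla Ie_c\}\neq0$ on edges), and they must be bounded separately by $S_1(u_h,u_h)^{1/2}\Vert e_c\Vert_\varepsilon$ and $S_2(u_h,u_h)^{1/2}\Vert e_c\Vert_\varepsilon$; this is precisely why $S_i(u_h,u_h)|_T$ appears in the definition of $\eta_T$. Without this step the argument does not close, since nothing in your residual identity accounts for the stabilizer part of $\eta_h$.
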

\begin{proof}
  For $v\in \mathbb{P}_{k}(\mathcal{T}_{h})\cap C(\overline{\Omega})$, the discrete weak Hessian is defined as
  \begin{align*}
    \left(D_{w}^{2}v, \widetilde{\varphi}\right)_{T} = \left(v, \nabla\cdot\nabla\cdot\widetilde{\varphi}\right)_{T} - \left\langle v, \nabla\cdot\widetilde{\varphi} \cdot \mathbf{n} \right\rangle_{\partial T} + \left\langle \{v\}, \widetilde{\varphi}\mathbf{n} \right\rangle_{\partial T}.
  \end{align*}
  Since $u$ is the solution to the weak formulation and $e_{c} \in H_{0}^{2}(\Omega)$, the following equation holds 
  \begin{align}\label{up4.3}
    \left( D^{2}u, D^{2}e_c \right)_{\mathcal{T}_{h}} + \left( \nabla u, \nabla e_c \right)_{\mathcal{T}_{h}} = \left( f, e_c \right)_{\mathcal{T}_{h}}.
  \end{align}
  Combining (\ref{up4.3}) with the WG scheme (\ref{3.3}), we expand $l_{1}(u_h)$ as follows
  \begin{align*}
    l_{1}(u_{h}) =& \sum_{T\in\mathcal{T}_{h}} \left( \varepsilon^{2} \left( D^{2}u, D^{2}e_c \right)_{T} - \varepsilon^{2} \left( D_{w}^{2}u_{h}, D_{w}^{2}e_c \right)_{T} + \left( \nabla u, \nabla e_c \right)_{T} - \left( \nabla_{w}u_{h}, \nabla_{w}e_c \right)_{T} \right)\\
    =& \sum_{T\in\mathcal{T}_{h}} \left( \left( f, e_c \right)_{T} - \varepsilon^{2} \left( D_{w}^{2}u_{h}, D_{w}^{2}e_c \right)_{T} - \left( \nabla_{w}u_{h}, \nabla_{w}e_c \right)_{T} \right)\\
    &+ S_{1}(u_{h}, Ie_{c}) + S_{2}(u_{h}, Ie_{c}) \left.\right)\\
    =& \sum_{T\in\mathcal{T}_{h}} \left(\right. \left( f-f_{h}, e_c - Ie_{c} \right)_{T} + \left( f_{h} - \nabla\cdot\nabla\cdot \varepsilon^{2}D_{w}^{2}u_{h} + \nabla\cdot \nabla_{w}u_{h}, e_c - Ie_{c} \right)_{T}\\
    &+ \left\langle \left(\nabla\cdot \varepsilon^{2} D_{w}^{2}u_{h} - \nabla_{w}u_{h}\right) \cdot \mathbf{n}, e_c-Ie_{c} \right\rangle_{\partial T} - \left\langle D_{w}^{2}u_{h} \mathbf{n}, \nabla e_c-\{\nabla Ie_{c}\} \right\rangle_{\partial T}\\
    &+ S_{1}(u_{h}, Ie_{c}) + S_{2}(u_{h}, Ie_{c}) \left.\right).
  \end{align*}
  where $I$ denotes the Lagrange linear interpolant and $Ie_{c} \in \mathbb{P}_{k}(\mathcal{T}_{h})\cap C(\overline{\Omega})$. The following approximation properties hold
  \begin{align*}
    &\left\Vert e_c - Ie_{c} \right\Vert_{T} \leq \varepsilon^{-1} h_{T}^{2} \varepsilon\left\vert e_c \right\vert_{2,T},
    &&\left\Vert e_c - Ie_{c} \right\Vert_{T} \leq h_{T} \left\vert e_c \right\vert_{1,T}.
  \end{align*}
  which together imply 
  \begin{align}\label{I_T}
    \left\Vert e_c - Ie_c \right\Vert_{T} \leq \alpha_{T} \left\Vert e_{c} \right\Vert_{\varepsilon,T}.
  \end{align}
  Furthermore, on each edge $e \subset \partial T$, applying the trace inequality and the inverse inequality yields
  \begin{align}\label{I_e0}
    \left\Vert e_c - Ie_c \right\Vert_{e} \leq \left\Vert e_c - Ie_c \right\Vert_{T_{e}}^{1/2} \left\vert e_c - Ie_c \right\vert_{1,T_{e}}^{1/2} \leq \alpha_{e,1} \left\Vert e_{c} \right\Vert_{\varepsilon,T_{e}}.
  \end{align}
  Similarly,
  \begin{align}\label{I_e1}
    \left\Vert \nabla (e_c- Ie_{c}) \right\Vert_{e} \leq \left\Vert \nabla (e_c- Ie_{c}) \right\Vert_{T_{e}}^{1/2} \left\vert \nabla (e_c- Ie_{c}) \right\vert_{1,T_{e}}^{1/2} \leq \alpha_{e,2} \left\Vert e_{c} \right\Vert_{\varepsilon,T_{e}}.
  \end{align}
  where $\alpha_{e,1}$ and $\alpha_{e,2}$ are as defined previously.

  For the first term of $l_{1}(u_{h})$, applying the Cauchy-Schwarz inequality and the interpolation estimate (\ref{I_T}) yields
  \begin{align*}
    \sum_{T\in\mathcal{T}_{h}} \left( f-f_{h}, e_c - Ie_{c} \right)_{T} \leq \sum_{T\in\mathcal{T}_{h}} \left\Vert f-f_{h} \right\Vert_{T} \left\Vert e_c - Ie_{c} \right\Vert_{T} \leq \sum_{T\in\mathcal{T}_{h}} \alpha_{T} \left\Vert f-f_{h} \right\Vert_{T} \left\Vert e_{c} \right\Vert_{\varepsilon,T}.
  \end{align*}
  For the second term, again using the Cauchy-Schwarz inequality and (\ref{I_T}), we obtain 
  \begin{align*}
    \sum_{T\in\mathcal{T}_{h}} \left( e_c - Ie_{c}, f_{h} - \nabla\cdot\nabla\cdot \varepsilon^{2}D_{w}^{2}u_{h} + \nabla\cdot \nabla_{w}u_{h} \right)_{T} =& \sum_{T\in\mathcal{T}_{h}} \left( e_c - Ie_{c}, R_{T} \right)_{T}\\
    \leq& \sum_{T\in\mathcal{T}_{h}} \left\Vert R_{T} \right\Vert_{T} \left\Vert e_c - Ie_{c} \right\Vert_{T}\\
    \leq& \sum_{T\in\mathcal{T}_{h}} \alpha_{T} \left\Vert R_{T} \right\Vert_{T} \left\Vert e_{c} \right\Vert_{\varepsilon,T}.
  \end{align*}
  For the third term, applying the Cauchy-Schwarz inequality and the edge estimate (\ref{I_e0}) gives
  \begin{align*}
    \sum_{T\in\mathcal{T}_{h}} \left\langle \left(\nabla\cdot \varepsilon^{2} D_{w}^{2}u_{h} - \nabla_{w}u_{h}\right) \cdot \mathbf{n}, e_c-Ie_{c} \right\rangle_{\partial T} =& \sum_{e\in \mathcal{E}_{h}} \left\langle J_{e,1}, e_c-Ie_{c} \right\rangle_{e}\\
    \leq& \sum_{e\in \mathcal{E}_{h}} \left\Vert J_{e,1} \right\Vert_{e} \left\Vert e_c - Ie_c \right\Vert_{e}\\
    \leq& \sum_{e\in \mathcal{E}_{h}} \alpha_{e,1} \left\Vert J_{e,1} \right\Vert_{e} \left\Vert e_{c} \right\Vert_{\varepsilon,T}
  \end{align*}
  For the fourth term, using the Cauchy-Schwarz inequality and the interpolation property (\ref{I_e1}) leads to 
  \begin{align*}
    \sum_{T\in\mathcal{T}_{h}} \left\langle D_{w}^{2}u_{h} \mathbf{n}, \nabla e_c-\{\nabla Ie_{c}\} \right\rangle_{\partial T} =& \sum_{e\in \mathcal{E}_{h}} \left\langle J_{e,2}, \nabla e_c-\{\nabla Ie_{c}\} \right\rangle_{e}\\
    \leq& \sum_{e\in \mathcal{E}_{h}} \left\Vert J_{e,2} \right\Vert_{e} \left( \left\Vert \nabla (e_c- Ie_{c}^{+}) \right\Vert_{e} + \left\Vert \nabla (e_c- Ie_{c}^{-}) \right\Vert_{e} \right)\\
    \leq& \sum_{e\in \mathcal{E}_{h}} \alpha_{e,2} \left\Vert J_{e,2} \right\Vert_{e} \left\Vert e_{c} \right\Vert_{\varepsilon,T}
  \end{align*}
  Similarly, for the remaining two terms, applying Cauchy-Schwarz inequality and (\ref{I_e1}) yields
  \begin{align*}
    S_{1}\left(u_{h}, Ie_{c}\right) =& \sum_{T\in\mathcal{T}_h} \varepsilon^{2} h_{T}^{-1}\left\langle\nabla u_0-\mathbf{u}_g, \nabla Ie_c- \{\nabla Ie_c\}\right\rangle_{\partial T}\\
    \leq& \sum_{T\in\mathcal{T}_h} \varepsilon^{2} h_{T}^{-1} \left\Vert \nabla u_0-\mathbf{u}_g \right\Vert_{\partial T} \left( \left\Vert \nabla (Ie_c- Ie_c^{+}) \right\Vert_{\partial T} + \left\Vert \nabla (Ie_c- Ie_c^{-}) \right\Vert_{\partial T}\right)\\
    \leq& \sum_{T\in\mathcal{T}_h} \varepsilon h_{T}^{-1/2} \left\Vert \nabla u_0-\mathbf{u}_g \right\Vert_{\partial T} \varepsilon \left\vert e_c \right\vert_{2,T}\\
    \leq& S_{1}(u_h,u_h)^{1/2} \left\Vert e_c \right\Vert_{\varepsilon,T}, 
  \end{align*}
  and
  \begin{align*}
    S_{2}\left(u_{h}, Ie_{c}\right) =& \sum_{T\in\mathcal{T}_h} h_{T}\left\langle\nabla u_0-\mathbf{u}_g, \nabla Ie_c- \{\nabla Ie_c\}\right\rangle_{\partial T}\\
    \leq& \sum_{T\in\mathcal{T}_h} h_{T} \left\Vert \nabla u_0-\mathbf{u}_g \right\Vert_{\partial T} \left( \left\Vert \nabla (Ie_c- Ie_c^{+}) \right\Vert_{\partial T} + \left\Vert \nabla (Ie_c- Ie_c^{-}) \right\Vert_{\partial T}\right)\\
    \leq& \sum_{T\in\mathcal{T}_h} h_{T}^{1/2} \left\Vert \nabla u_0-\mathbf{u}_g \right\Vert_{\partial T} \left\vert e_c \right\vert_{1,T}\\
    \leq& S_{2}(u_h,u_h)^{1/2} \left\Vert e_c \right\Vert_{\varepsilon,T}.
  \end{align*}
  Combining all the above estimates, we conclude that
  \begin{align*}
    l_{1}(u_{h}) \leq& C \eta_{h} \Vert e_c \Vert_{\varepsilon}.
  \end{align*}
\end{proof}
\begin{lemma}\label{uplemma4.3}
  Let $u\in H^2_0(\Omega)$ and $u_h \in V_{h}^{0}$ be the solutions to (\ref{1.1})-(\ref{1.2}) and (\ref{3.3}), respectively. Then we have
  \begin{align}
    l_{2}(u_{h}) \leq& \frac{1}{4} \left( \varepsilon^{2} \left\Vert D^{2} u - D_{w}^{2}u_{h} \right\Vert_{\mathcal{T}_{h}}^{2} + \left\Vert \nabla u - \nabla_{w}u_{h} \right\Vert_{\mathcal{T}_{h}}^{2} \right) + C \eta_{h}^{2},
  \end{align}
  where
  \begin{align*}
    l_{2}(u_{h}) = \varepsilon^{2} \left( D^{2} u - D_{w}^{2}u_{h}, D_{w}^{2}e_{d} \right)_{\mathcal{T}_{h}} + \left( \nabla u - \nabla_{w}u_{h}, \nabla_{w}e_{d} \right)_{\mathcal{T}_{h}}.
  \end{align*}
\end{lemma}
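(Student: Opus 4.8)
The plan is to bound $l_2(u_h)$ by Cauchy--Schwarz and Young, which turns the statement into an estimate of the discrete weak energy of $e_d=u_c-u_h$, and then to absorb that energy into the stabilization contributions $S_1(u_h,u_h)$ and $S_2(u_h,u_h)$ that already appear in $\eta_h^2$, the link being the recovery--operator bounds of Lemma~\ref{ROP}. Applying Cauchy--Schwarz to the two terms of $l_2(u_h)$ and then Young's inequality in the form $ab\le\tfrac14a^2+b^2$, pairing $\varepsilon\Vert D^2u-D_w^2u_h\Vert_{\mathcal{T}_h}$ with $\varepsilon\Vert D_w^2e_d\Vert_{\mathcal{T}_h}$ and $\Vert\nabla u-\nabla_wu_h\Vert_{\mathcal{T}_h}$ with $\Vert\nabla_we_d\Vert_{\mathcal{T}_h}$, we obtain
\begin{align*}
  l_2(u_h)\le\tfrac14\left(\varepsilon^2\Vert D^2u-D_w^2u_h\Vert_{\mathcal{T}_h}^2+\Vert\nabla u-\nabla_wu_h\Vert_{\mathcal{T}_h}^2\right)+\varepsilon^2\Vert D_w^2e_d\Vert_{\mathcal{T}_h}^2+\Vert\nabla_we_d\Vert_{\mathcal{T}_h}^2,
\end{align*}
so it remains to show $\varepsilon^2\Vert D_w^2e_d\Vert_{\mathcal{T}_h}^2+\Vert\nabla_we_d\Vert_{\mathcal{T}_h}^2\le C\eta_h^2$.

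For this remaining bound I would split the weak operators of $e_d$ cellwise. Since $u_c=E(u_0)\in H_0^2(\Omega)$, Lemmas~\ref{lemma2.1} and \ref{lemma2.2} give $D_w^2u_c=\mathbb{Q}_h(D^2u_c)$ and $\nabla_wu_c=\mathbf{Q}_h(\nabla u_c)$; moreover $D^2u_0$ and $\nabla u_0$ are polynomials of degree at most $k$ and hence invariant under the relevant $L^2$-projections, so by linearity
\begin{align*}
  D_w^2e_d &=\mathbb{Q}_h\big(D^2(u_c-u_0)\big)+\big(D^2u_0-D_w^2u_h\big),\\
  \nabla_we_d &=\mathbf{Q}_h\big(\nabla(u_c-u_0)\big)+\big(\nabla u_0-\nabla_wu_h\big).
\end{align*}
Because $L^2$-projections are contractions, the first summands are bounded on each $T$ by $|u_c-u_0|_{2,T}$ and $|u_c-u_0|_{1,T}$. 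For the second summands, testing (\ref{3.1}) with $\widetilde\varphi=D^2u_0-D_w^2u_h$ and (\ref{3.2}) with $\mathbf{q}=\nabla u_0-\nabla_wu_h$, integrating by parts to eliminate $u_0$, and using the trace and inverse inequalities gives
\begin{align*}
  \Vert D^2u_0-D_w^2u_h\Vert_T^2 &\le C\big(h_T^{-3}\Vert u_0-u_b\Vert_{\partial T}^2+h_T^{-1}\Vert\nabla u_0-\mathbf{u}_g\Vert_{\partial T}^2\big),\\
  \Vert\nabla u_0-\nabla_wu_h\Vert_T^2 &\le Ch_T^{-1}\Vert u_0-u_b\Vert_{\partial T}^2,
\end{align*}
whose $\varepsilon^2$-weighted, respectively unweighted, sums over $\mathcal{T}_h$ are dominated by $S_1(u_h,u_h)$, respectively $S_2(u_h,u_h)$. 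Consequently $\varepsilon^2\Vert D_w^2e_d\Vert_{\mathcal{T}_h}^2+\Vert\nabla_we_d\Vert_{\mathcal{T}_h}^2\le C\big(\varepsilon^2\sum_{T}|u_c-u_0|_{2,T}^2+\sum_{T}|u_c-u_0|_{1,T}^2+S_1(u_h,u_h)+S_2(u_h,u_h)\big)$.

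It then remains to control the recovery error. Lemma~\ref{ROP} with $\alpha=2$, multiplied by $\varepsilon^2$, bounds $\varepsilon^2\sum_{T}|u_c-u_0|_{2,T}^2$ by $C\sum_{e}(\varepsilon^2h_e^{-3}\Vert[u_0]\Vert_e^2+\varepsilon^2h_e^{-1}\Vert[\nabla u_0]\Vert_e^2)$, and with $\alpha=1$ it bounds $\sum_{T}|u_c-u_0|_{1,T}^2$ by $C\sum_{e}(h_e^{-1}\Vert[u_0]\Vert_e^2+h_e\Vert[\nabla u_0]\Vert_e^2)$. Since $u_b$ and $\mathbf{u}_g$ are single-valued on each interior edge and vanish on $\partial\Omega$, the triangle inequality yields $\Vert[u_0]\Vert_e\le\Vert u_0^+-u_b\Vert_e+\Vert u_0^--u_b\Vert_e$ and $\Vert[\nabla u_0]\Vert_e\le\Vert\nabla u_0^+-\mathbf{u}_g\Vert_e+\Vert\nabla u_0^--\mathbf{u}_g\Vert_e$; regrouping these edge contributions into element boundary integrals and matching the powers of $h_T$ then shows the $\alpha=2$ bound (times $\varepsilon^2$) is dominated by $S_1(u_h,u_h)$ and the $\alpha=1$ bound by $S_2(u_h,u_h)$. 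Combining with the previous step and using $S_1(u_h,u_h)+S_2(u_h,u_h)\le\sum_{T}\sum_{i=1}^2S_i(u_h,u_h)|_T\le\sum_{T}\eta_T^2=\eta_h^2$, we get $\varepsilon^2\Vert D_w^2e_d\Vert_{\mathcal{T}_h}^2+\Vert\nabla_we_d\Vert_{\mathcal{T}_h}^2\le C\eta_h^2$, and with the first display this proves the lemma.

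The step I expect to be the main obstacle is the middle one: identifying $D_w^2u_h$, $\nabla_wu_h$ with the classical $D^2u_0$, $\nabla u_0$ up to remainders that are \emph{exactly} absorbed by the two stabilizers (which needs a careful integration by parts in (\ref{3.1})--(\ref{3.2}) together with trace and inverse inequalities valid on the mesh at hand), and keeping the $\varepsilon$-weights and the powers of $h_T$ aligned so that Lemma~\ref{ROP} with $\alpha=1$ matches $S_2$ and with $\alpha=2$ matches $\varepsilon^{-2}S_1$. The Cauchy--Schwarz/Young reduction and the contractivity of the projections are routine.
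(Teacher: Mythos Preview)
Your proposal is correct and follows essentially the same route as the paper: reduce by Cauchy--Schwarz and Young to bounding $\varepsilon^{2}\Vert D_w^2e_d\Vert_{\mathcal T_h}^{2}+\Vert\nabla_we_d\Vert_{\mathcal T_h}^{2}$ by $S_1(u_h,u_h)+S_2(u_h,u_h)\le\eta_h^{2}$ via Lemma~\ref{ROP} and the single-valuedness of $u_b,\mathbf u_g$. The only cosmetic difference is that you first split $D_w^2e_d=\mathbb Q_h\big(D^2(u_c-u_0)\big)+(D^2u_0-D_w^2u_h)$ and bound the two pieces separately, whereas the paper expands $\Vert D_w^2e_d\Vert_T^{2}$ directly from definition~(\ref{3.1}) (and $\Vert\nabla_we_d\Vert_T^{2}$ from~(\ref{3.2})), arriving at the same boundary remainders $\langle u_0-u_b,\cdot\rangle_{\partial T}$ and $\langle\nabla u_0-\mathbf u_g,\cdot\rangle_{\partial T}$ in one step.
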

\begin{proof}
  Applying the Cauchy-Schwarz and triangle inequalities gives
  \begin{align*}
    l_{2}(u_{h}) \leq& \sum_{T\in\mathcal{T}_{h}} \left( \varepsilon\left\Vert D^{2} u - D_{w}^{2}u_{h} \right\Vert_{T} \varepsilon\left\Vert D_{w}^{2}e_{d} \right\Vert_{T} + \left\Vert \nabla u - \nabla_{w}u_{h} \right\Vert_{T} \left\Vert \nabla_{w}e_{d} \right\Vert_{T} \right)\\
    \leq& \left( \varepsilon^{2} \left\Vert D^{2} u - D_{w}^{2}u_{h} \right\Vert_{\mathcal{T}_{h}}^{2} + \left\Vert \nabla u - \nabla_{w}u_{h} \right\Vert_{\mathcal{T}_{h}}^{2} \right)^{1/2} \left( \varepsilon^{2} \left\Vert D_{w}^{2}e_{d} \right\Vert_{\mathcal{T}_{h}}^{2} +  \left\Vert \nabla_{w}e_{d} \right\Vert_{\mathcal{T}_{h}}^{2} \right)^{1/2}.
  \end{align*}
  From definition (\ref{3.1}), we derive
  \begin{align*}
    \varepsilon^{2} \left\Vert D_{w}^{2}e_{d} \right\Vert_{T}^{2} =& \varepsilon^{2} \left( u_{c}-u_{0}, \nabla\cdot\nabla\cdot D_{w}^{2}e_{d} \right)_{T} - \varepsilon^{2}\left\langle u_{c}-u_{b}, \nabla\cdot D_{w}^{2}e_{d} \cdot\mathbf{n} \right\rangle_{\partial T} + \varepsilon^{2}\left\langle \nabla u_{c}- \mathbf{u}_{g}, D_{w}^{2}e_{d} \mathbf{n} \right\rangle_{\partial T}\\
    =& \varepsilon^{2} \left( D^{2}(u_{c}-u_{0}), D_{w}^{2}e_{d} \right)_{T} + \varepsilon^{2}\left\langle u_{c}-u_{0}, \nabla\cdot D_{w}^{2}e_{d} \cdot\mathbf{n} \right\rangle_{\partial T} - \varepsilon^{2}\left\langle \nabla u_{c}- \nabla u_{0}, D_{w}^{2}e_{d} \mathbf{n} \right\rangle_{\partial T}\\
    &- \varepsilon^{2}\left\langle u_{c}-u_{b}, \nabla\cdot D_{w}^{2}e_{d} \cdot\mathbf{n} \right\rangle_{\partial T} + \varepsilon^{2}\left\langle \nabla u_{c}- \mathbf{u}_{g}, D_{w}^{2}e_{d} \mathbf{n} \right\rangle_{\partial T}\\
    =& \varepsilon^{2} \left( D^{2}(u_{c}-u_{0}), D_{w}^{2}e_{d} \right)_{T} - \varepsilon^{2}\left\langle u_{0}-u_{b}, \nabla\cdot D_{w}^{2}e_{d} \cdot\mathbf{n} \right\rangle_{\partial T} + \varepsilon^{2}\left\langle \nabla u_{0}- \mathbf{u}_{g}, D_{w}^{2}e_{d} \mathbf{n} \right\rangle_{\partial T}.
  \end{align*}
  Thanks to the single-valuedness of $u_{b}$ and $\mathbf{u}_{g}$ over each edge $e$. By the definition of $u_{c}$ and Lemma \ref{ROP}, we obtain
  \begin{align}
    \sum_{T\in\mathcal{T}_{h}} \varepsilon^{2} \left\Vert D^{2}(u_{c}-u_{0}) \right\Vert_{T}^{2} \leq& C \sum_{e \in\mathcal{E}_{h}} \left( \varepsilon^{2} h_{e}^{-3} \left\Vert [u_{0}] \right\Vert_{e}^{2} + \varepsilon^{2} h_{e}^{-1} \left\Vert [\nabla u_{0}] \right\Vert_{e}^{2} \right) \notag \\
    \leq& C \sum_{e \in\mathcal{E}_{h}} \left( \varepsilon^{2} h_{e}^{-3} \left\Vert [u_{0}-u_{b}] \right\Vert_{e}^{2} + \varepsilon^{2} h_{e}^{-1} \left\Vert [\nabla u_{0}-\mathbf{u}_{g}] \right\Vert_{e}^{2} \right) \notag \\
    \leq& C \sum_{T \in\mathcal{T}_{h}} \left( \varepsilon^{2} h_{T}^{-3} \left\Vert u_{0}-u_{b} \right\Vert_{T}^{2} + \varepsilon^{2} h_{T}^{-1} \left\Vert \nabla u_{0}-\mathbf{u}_{g} \right\Vert_{T}^{2} \right) \notag \\
    \leq& C S_{1}(u_{h}, u_{h}).\label{up4.9}
  \end{align}
  Using the Cauchy-Schwarz inequality and combining with (\ref{up4.9}) yields
  \begin{align}
    \varepsilon^{2} \left\Vert D_{w}^{2}e_{d} \right\Vert_{\mathcal{T}_{h}}^{2} \leq& C \sum_{T\in\mathcal{T}_{h}} \left(\right. \varepsilon \left\Vert D^{2}(u_{c}-u_{0}) \right\Vert_{T} \varepsilon \left\Vert D_{w}^{2}e_{d} \right\Vert_{T} \notag \\ 
    &+ \varepsilon \left\Vert u_{0}-u_{b} \right\Vert_{\partial T}  \varepsilon \left\Vert \nabla\cdot D_{w}^{2}e_{d} \cdot\mathbf{n} \right\Vert_{\partial T} + \varepsilon \left\Vert \nabla u_{0}- \mathbf{u}_{g} \right\Vert_{\partial T} \varepsilon \left\Vert D_{w}^{2}e_{d} \mathbf{n} \right\Vert_{\partial T} \left.\right) \notag \\
    \leq& \left( \sum_{T\in\mathcal{T}_{h}} \left( \varepsilon^2 \left\Vert D^{2}(u_{c}-u_{0}) \right\Vert_{T}^{2} + \varepsilon^2 h_{T}^{-3} \left\Vert u_{0}-u_{b} \right\Vert_{\partial T}^{2} + \varepsilon^2 h_{T}^{-1} \left\Vert \nabla u_{0}- \mathbf{u}_{g} \right\Vert_{\partial T}^{2} \right) \right)^{1/2} \notag \\
    &\left( \varepsilon^2 \left\Vert D_{w}^{2}e_{d} \right\Vert_{\mathcal{T}_{h}}^{2} \right)^{1/2} \notag \\
    \leq& C S_{1}(u_{h}, u_{h})^{1/2} \left( \varepsilon^2 \left\Vert D_{w}^{2}e_{d} \right\Vert_{\mathcal{T}_{h}}^{2} \right)^{1/2}.\label{up4.10}
  \end{align}

  Similarly, from definition (\ref{3.2}) we have
  \begin{align*}
    \left( \nabla_{w}e_{d}, \nabla_{w}e_{d} \right)_{T} =& - \left( u_{c}-u_{0}, \nabla\cdot \nabla_{w}e_{d} \right)_{T} + \left\langle u_{c}-u_{b}, \nabla_{w}e_{d} \cdot\mathbf{n} \right\rangle_{\partial T}\\
    =& \left( \nabla(u_{c}-u_{0}), \nabla_{w}e_{d} \right)_{T} - \left\langle u_{c}-u_{0}, \nabla_{w}e_{d} \cdot\mathbf{n} \right\rangle_{\partial T} + \left\langle u_{c}-u_{b}, \nabla_{w}e_{d} \cdot\mathbf{n} \right\rangle_{\partial T}\\
    =& \left( \nabla(u_{c}-u_{0}), \nabla_{w}e_{d} \right)_{T} + \left\langle u_{0}-u_{b}, \nabla_{w}e_{d} \cdot\mathbf{n} \right\rangle_{\partial T}.
  \end{align*}
  By the definition of $u_{c}$ and Lemma \ref{ROP}, and using the condition single-valuedness of $u_{b}$, we get
  \begin{align}
    \sum_{T\in\mathcal{T}_{h}} \left\Vert \nabla(u_{c}-u_{0}) \right\Vert_{T}^{2} \leq& C \sum_{e \in\mathcal{E}_{h}} \left( h_{e}^{-1} \left\Vert [u_{0}] \right\Vert_{e}^{2} + h_{e} \left\Vert [\nabla u_{0}] \right\Vert_{e}^{2} \right) \notag \\
    \leq& C \sum_{e \in\mathcal{E}_{h}} \left( h_{e}^{-1} \left\Vert [u_{0}-u_{b}] \right\Vert_{e}^{2} + h_{e} \left\Vert [\nabla u_{0}-\mathbf{u}_{g}] \right\Vert_{e}^{2} \right) \notag \\
    \leq& C \sum_{T \in\mathcal{T}_{h}} \left( h_{T}^{-1} \left\Vert u_{0}-u_{b} \right\Vert_{T}^{2} + h_{T} \left\Vert \nabla u_{0}-\mathbf{u}_{g} \right\Vert_{T}^{2} \right) \notag \\
    \leq& C S_{2}(u_{h}, u_{h}).\label{up4.11}
  \end{align}
  Applying the Cauchy-Schwarz inequality and combining with (\ref{up4.11}) gives
  \begin{align}
    \left\Vert \nabla_{w}e_{d} \right\Vert_{\mathcal{T}_{h}}^{2} \leq& C \sum_{T\in\mathcal{T}_{h}} \left( \left\Vert \nabla(u_{c}-u_{0}) \right\Vert_{T} \left\Vert \nabla_{w}e_{d} \right\Vert_{T} + \left\Vert u_{0}-u_{b} \right\Vert_{\partial T} \left\Vert \nabla_{w}e_{d} \cdot\mathbf{n} \right\Vert_{\partial T} \right) \notag \\
    \leq& \left( \sum_{T\in\mathcal{T}_{h}} \left( \left\Vert \nabla(u_{c}-u_{0}) \right\Vert_{T}^{2} + h_{T}^{-1} \left\Vert u_{0}-u_{b} \right\Vert_{\partial T}^{2} \right) \right)^{1/2} \left( \left\Vert \nabla_{w}e_{d} \right\Vert_{\mathcal{T}_{h}}^{2} \right)^{1/2} \notag \\
    \leq& C S_{2}(u_{h}, u_{h})^{1/2} \left( \left\Vert \nabla_{w}e_{d} \right\Vert_{\mathcal{T}_{h}}^{2} \right)^{1/2}.\label{up4.12}
  \end{align}
  Adding inequalities (\ref{up4.10}) and (\ref{up4.12}) yields
  \begin{align*}
    \varepsilon^{2} \left\Vert D_{w}^{2}e_{d} \right\Vert_{\mathcal{T}_{h}}^{2} + \left\Vert \nabla_{w}e_{d} \right\Vert_{\mathcal{T}_{h}}^{2} \leq& C \left( S_{1}(u_{h}, u_{h})^{1/2} \left( \varepsilon^2 \left\Vert D_{w}^{2}e_{d} \right\Vert_{\mathcal{T}_{h}}^{2} \right)^{1/2} + S_{2}(u_{h}, u_{h})^{1/2} \left( \left\Vert \nabla_{w}e_{d} \right\Vert_{\mathcal{T}_{h}}^{2} \right)^{1/2} \right)\\
    \leq& C \left( S_{1}(u_{h}, u_{h})  + S_{2}(u_{h}, u_{h}) \right)^{1/2} \left( \varepsilon^2 \left\Vert D_{w}^{2}e_{d} \right\Vert_{\mathcal{T}_{h}}^{2} + \left\Vert \nabla_{w}e_{d} \right\Vert_{\mathcal{T}_{h}}^{2} \right)^{1/2}\\
    \leq& C \eta_{h} \left( \varepsilon^2 \left\Vert D_{w}^{2}e_{d} \right\Vert_{\mathcal{T}_{h}}^{2} + \left\Vert \nabla_{w}e_{d} \right\Vert_{\mathcal{T}_{h}}^{2} \right)^{1/2},
  \end{align*}
  which implies 
  \begin{align}\label{up4.13}
    \left( \varepsilon^2 \left\Vert D_{w}^{2}e_{d} \right\Vert_{\mathcal{T}_{h}}^{2} + \left\Vert \nabla_{w}e_{d} \right\Vert_{\mathcal{T}_{h}}^{2} \right)^{1/2} \leq C \eta_{h}.
  \end{align}
  Combining the above results, we conclude
  \begin{align*}
    l_{2}(u_{h}) \leq& C \eta_{h} \left( \varepsilon^{2} \left\Vert D^{2} u - D_{w}^{2}u_{h} \right\Vert_{\mathcal{T}_{h}}^{2} + \left\Vert \nabla u - \nabla_{w}u_{h} \right\Vert_{\mathcal{T}_{h}}^{2} \right)^{1/2}\\
    \leq& \frac{1}{4} \left( \varepsilon^{2} \left\Vert D^{2} u - D_{w}^{2}u_{h} \right\Vert_{\mathcal{T}_{h}}^{2} + \left\Vert \nabla u - \nabla_{w}u_{h} \right\Vert_{\mathcal{T}_{h}}^{2} \right) + C \eta_{h}^{2}.
  \end{align*}
\end{proof}

\begin{lemma}\label{uplemma4.4}
  Since $e_{c}\in H_{0}^{2}(\Omega)$, the following estimate holds
  \begin{align}
    \Vert e_c \Vert_{\varepsilon} \leq& C \left( \varepsilon^{2} \left\Vert D^{2} u - D_{w}^{2} u_{h} \right\Vert_{\mathcal{T}_{h}}^{2} + \left\Vert \nabla u - \nabla_{w} u_{c} \right\Vert_{\mathcal{T}_{h}}^{2} \right)^{1/2} +C \eta_{h}.
  \end{align}
\end{lemma}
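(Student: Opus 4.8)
Since $u\in H_0^2(\Omega)$ and, by Lemma~\ref{ROP}, $u_c=E(u_0)\in \mathbb{V}_h^{C}\cap H_0^2(\Omega)$, the function $e_c=u-u_c$ lies in $H_0^2(\Omega)$, so that
\begin{align*}
  \Vert e_c\Vert_\varepsilon^2 = \varepsilon^2\left(D^2 e_c, D^2 e_c\right)_{\mathcal{T}_h} + \left(\nabla e_c, \nabla e_c\right)_{\mathcal{T}_h}.
\end{align*}
The plan is to estimate each of these two terms by keeping one factor equal to $D^2 e_c$ (respectively $\nabla e_c$), rewriting the other factor as $D^2 u - D^2 u_c$ (respectively $\nabla u - \nabla u_c$), inserting the discrete objects $D_{w}^{2}u_h$ and $\nabla_{w}u_c$ together with the local $L^2$-projections $\mathbb{Q}_h$ and $\mathbf{Q}_h$, and then applying the Cauchy--Schwarz inequality. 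Since $\varepsilon\vert e_c\vert_{2}\le\Vert e_c\Vert_\varepsilon$ and $\vert e_c\vert_{1}\le\Vert e_c\Vert_\varepsilon$, every resulting term will carry a factor $\Vert e_c\Vert_\varepsilon$, which can then be cancelled. Note that the weak Galerkin scheme (\ref{3.3}) is not needed here: the estimate is purely algebraic, built from Lemmas~\ref{lemma2.1}, \ref{lemma2.2}, \ref{ROP} and (\ref{up4.13}).

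For the second-order term I would write
\begin{align*}
  D^2 u - D^2 u_c = \left(D^2 u - D_{w}^{2}u_h\right) + \left(D_{w}^{2}u_h - \mathbb{Q}_h D^2 u_c\right) - (I-\mathbb{Q}_h) D^2 u_c,
\end{align*}
and observe that, by the linearity of the weak Hessian and Lemma~\ref{lemma2.1} applied to $u_c$ on each cell, $D_{w}^{2}u_h - \mathbb{Q}_h D^2 u_c = -D_{w}^{2}e_d$. Pairing with $D^2 e_c$ and using Cauchy--Schwarz, the three contributions are controlled by $\varepsilon\Vert D^2 u - D_{w}^{2}u_h\Vert_{\mathcal{T}_h}\Vert e_c\Vert_\varepsilon$, by $\varepsilon\Vert D_{w}^{2}e_d\Vert_{\mathcal{T}_h}\Vert e_c\Vert_\varepsilon\le C\eta_h\Vert e_c\Vert_\varepsilon$ via (\ref{up4.13}), and by $\varepsilon\Vert(I-\mathbb{Q}_h)D^2 u_c\Vert_{\mathcal{T}_h}\Vert e_c\Vert_\varepsilon$, where in the last one I use that $(I-\mathbb{Q}_h)D^2 u_c$ is $L^2(T)$-orthogonal to $[\mathbb{P}_k(T)]^{2\times2}$ and $\Vert(I-\mathbb{Q}_h)D^2 e_c\Vert_T\le\Vert D^2 e_c\Vert_T$. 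The first-order term is handled in the same way: writing $\nabla u - \nabla u_c = (\nabla u - \nabla_{w}u_c) - (I-\mathbf{Q}_h)\nabla u_c$ (Lemma~\ref{lemma2.2}) and pairing with $\nabla e_c$, one obtains the bound $\Vert\nabla u - \nabla_{w}u_c\Vert_{\mathcal{T}_h}\Vert e_c\Vert_\varepsilon + \Vert(I-\mathbf{Q}_h)\nabla u_c\Vert_{\mathcal{T}_h}\Vert e_c\Vert_\varepsilon$.

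The step I expect to be the main obstacle is controlling the projection-error terms $\varepsilon\Vert(I-\mathbb{Q}_h)D^2 u_c\Vert_{\mathcal{T}_h}$ and $\Vert(I-\mathbf{Q}_h)\nabla u_c\Vert_{\mathcal{T}_h}$, which look circular at first sight since $u_c$ is only piecewise polynomial on a macro-subdivision of each cell. The key observation is that $\mathbb{Q}_h$ leaves $D^2 u_0\in[\mathbb{P}_{k-2}(T)]^{2\times2}$ invariant and $\mathbf{Q}_h$ leaves $\nabla u_0\in[\mathbb{P}_{k-1}(T)]^{2}$ invariant, so that $(I-\mathbb{Q}_h)D^2 u_c=(I-\mathbb{Q}_h)D^2(u_c-u_0)$ and $(I-\mathbf{Q}_h)\nabla u_c=(I-\mathbf{Q}_h)\nabla(u_c-u_0)$; hence these norms are bounded cellwise by $\vert u_c-u_0\vert_{2,T}$ and $\vert u_c-u_0\vert_{1,T}$, respectively. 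Lemma~\ref{ROP} with $\alpha=2$ and $\alpha=1$, combined with the single-valuedness of $u_b$ and $\mathbf{u}_g$ on each edge (which allows $[u_0]$ and $[\nabla u_0]$ to be replaced by $[u_0-u_b]$ and $[\nabla u_0-\mathbf{u}_g]$) and $h_e\simeq h_T$, then yields $\varepsilon^2\sum_{T}\vert u_c-u_0\vert_{2,T}^2\le C\,S_1(u_h,u_h)\le C\eta_h^2$ and $\sum_{T}\vert u_c-u_0\vert_{1,T}^2\le C\,S_2(u_h,u_h)\le C\eta_h^2$. Collecting all estimates gives
\begin{align*}
  \Vert e_c\Vert_\varepsilon^2 \le \left(\varepsilon\Vert D^2 u - D_{w}^{2}u_h\Vert_{\mathcal{T}_h} + \Vert\nabla u - \nabla_{w}u_c\Vert_{\mathcal{T}_h} + C\eta_h\right)\Vert e_c\Vert_\varepsilon,
\end{align*}
and dividing by $\Vert e_c\Vert_\varepsilon$ (the case $\Vert e_c\Vert_\varepsilon=0$ being trivial) together with $a+b\le\sqrt{2}\,(a^2+b^2)^{1/2}$ gives the assertion.
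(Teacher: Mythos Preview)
Your argument is correct and follows the same route as the paper: expand $\Vert e_c\Vert_\varepsilon^2$, split $D^2u-D^2u_c$ and $\nabla u-\nabla u_c$ by inserting $D_w^2 u_h$ and $\nabla_w u_c$, and then invoke the bound \eqref{up4.13} together with the stabilizer estimates coming from Lemma~\ref{ROP}. The paper carries this out more tersely, via a direct triangle inequality on the squared norms rather than your pairing-and-cancel device, and in fact its first displayed inequality silently drops the projection remainders $(I-\mathbb{Q}_h)D^2 u_c$ and $(I-\mathbf{Q}_h)\nabla u_c$; your plan makes these explicit and disposes of them exactly as in \eqref{up4.9}--\eqref{up4.11}, so your write-up is if anything more complete. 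Two very minor remarks: your aside about orthogonality of $(I-\mathbb{Q}_h)D^2 u_c$ and the bound $\Vert(I-\mathbb{Q}_h)D^2 e_c\Vert_T\le\Vert D^2 e_c\Vert_T$ is harmless but unnecessary, since a plain Cauchy--Schwarz already gives the needed bound; and for the first-order term your decomposition avoids $\nabla_w e_d$ altogether (because the target is $\nabla_w u_c$, not $\nabla_w u_h$), which is slightly cleaner than what the paper writes.
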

\begin{proof}
  By the definition of the $\Vert \cdot \Vert_{\varepsilon}$ and the previously established inequality (\ref{up4.13}), we derive the following estimate
  \begin{align*}
    \Vert e_c \Vert_{\varepsilon}^{2} =& \sum_{T\in\mathcal{T}_{h}} \left( \varepsilon^{2} \left\Vert D^{2} (u - u_{c}) \right\Vert_{T}^{2} + \left\Vert \nabla (u - u_{c}) \right\Vert_{T}^{2} \right)\\
    \leq& C \sum_{T\in\mathcal{T}_{h}} \left( \varepsilon^{2} \left\Vert D^{2} u - D_{w}^{2} u_{h} \right\Vert_{T}^{2} + \left\Vert \nabla u - \nabla_{w} u_{c} \right\Vert_{T}^{2} +\varepsilon^{2} \left\Vert D_{w}^{2} e_{d} \right\Vert_{T}^{2} + \left\Vert \nabla_{w} e_{d} \right\Vert_{T}^{2} \right)\\
    \leq& C \sum_{T\in\mathcal{T}_{h}} \left( \varepsilon^{2} \left\Vert D^{2} u - D_{w}^{2} u_{h} \right\Vert_{T}^{2} + \left\Vert \nabla u - \nabla_{w} u_{c} \right\Vert_{T}^{2} \right) + C \eta_{h}^2,
  \end{align*}
  which implies
  \begin{align*}
    \Vert e_c \Vert_{\varepsilon} \leq& C \left( \varepsilon^{2} \left\Vert D^{2} u - D_{w}^{2} u_{h} \right\Vert_{\mathcal{T}_{h}}^{2} + \left\Vert \nabla u - \nabla_{w} u_{c} \right\Vert_{\mathcal{T}_{h}}^{2} \right)^{1/2} +C \eta_{h}.
  \end{align*}
\end{proof}

\begin{theorem}
  Let $u\in H^2_0(\Omega)$ be the solution to (\ref{1.1})-(\ref{1.2}), and let $u_h \in V_{h}^{0}$ be the numerical approximation obtained from the scheme (\ref{3.3}). Then there exists a positive constant $C$, independent of $h$, $u$ and $u_h$, so that
  \begin{equation}\label{upper_bound}
    \3bar u-u_{h} \3bar \leq C \eta_{h}.
  \end{equation}
\end{theorem}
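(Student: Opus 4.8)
The plan is to obtain \eqref{upper_bound} by combining the three preparatory lemmas (Lemmas \ref{uplemma4.2}, \ref{uplemma4.3}, \ref{uplemma4.4}) through a self-absorption (``kickback'') argument. The first step is a reduction: abbreviate
\[
  Q := \varepsilon^{2}\bigl\Vert D^{2}u-D_{w}^{2}u_{h}\bigr\Vert_{\mathcal{T}_{h}}^{2}+\bigl\Vert\nabla u-\nabla_{w}u_{h}\bigr\Vert_{\mathcal{T}_{h}}^{2}.
\]
Because the weak-function representation of $u\in H_{0}^{2}(\Omega)$ is conforming (so that $u_{0}-u_{b}=0$ and $\nabla u_{0}-\mathbf{u}_{g}=0$ on interelement boundaries), the stabilizer contributions to $\3bar u-u_{h}\3bar^{2}$ collapse to $S_{1}(u_{h},u_{h})+S_{2}(u_{h},u_{h})$; moreover, by Lemmas \ref{lemma2.1}--\ref{lemma2.2} one has $D_{w}^{2}u=\mathbb{Q}_{h}(D^{2}u)$, $\nabla_{w}u=\mathbf{Q}_{h}(\nabla u)$, and since $D_{w}^{2}u_{h}$ and $\nabla_{w}u_{h}$ are already polynomials the $L^{2}$-projections only decrease the norms. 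Hence
\[
  \3bar u-u_{h}\3bar^{2}\le Q+S_{1}(u_{h},u_{h})+S_{2}(u_{h},u_{h}),
\]
and since $\sum_{T}S_{i}(u_{h},u_{h})|_{T}\le\eta_{h}^{2}$ by the very definition of $\eta_{h}$, it suffices to prove $Q\le C\eta_{h}^{2}$.

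For that, I would use the decomposition $u-u_{h}=e_{c}+e_{d}$ with $u_{c}=E(u_{0})$. Inserting $D^{2}u-D_{w}^{2}u_{h}=(D^{2}u-D_{w}^{2}u_{c})+D_{w}^{2}e_{d}$ and $\nabla u-\nabla_{w}u_{h}=(\nabla u-\nabla_{w}u_{c})+\nabla_{w}e_{d}$ into the inner products that define $Q$ splits it exactly as $Q=l_{1}(u_{h})+l_{2}(u_{h})$. Applying Lemma \ref{uplemma4.2} to $l_{1}$ and Lemma \ref{uplemma4.3} to $l_{2}$ gives
\[
  Q\le C\eta_{h}\,\Vert e_{c}\Vert_{\varepsilon}+\tfrac{1}{4}Q+C\eta_{h}^{2},
\]
so that $Q\le C\eta_{h}\Vert e_{c}\Vert_{\varepsilon}+C\eta_{h}^{2}$ after absorbing the $\tfrac{1}{4}Q$ term on the left.

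It remains to control $\Vert e_{c}\Vert_{\varepsilon}$ by $Q^{1/2}$ and $\eta_{h}$. Lemma \ref{uplemma4.4} bounds $\Vert e_{c}\Vert_{\varepsilon}$ by $\bigl(\varepsilon^{2}\Vert D^{2}u-D_{w}^{2}u_{h}\Vert_{\mathcal{T}_{h}}^{2}+\Vert\nabla u-\nabla_{w}u_{c}\Vert_{\mathcal{T}_{h}}^{2}\bigr)^{1/2}+C\eta_{h}$; writing $\nabla u-\nabla_{w}u_{c}=(\nabla u-\nabla_{w}u_{h})-\nabla_{w}e_{d}$ and invoking the bound $\Vert\nabla_{w}e_{d}\Vert_{\mathcal{T}_{h}}\le C\eta_{h}$ from \eqref{up4.13} yields $\Vert e_{c}\Vert_{\varepsilon}\le CQ^{1/2}+C\eta_{h}$. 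Substituting this into the previous estimate and applying Young's inequality,
\[
  Q\le C\eta_{h}\bigl(Q^{1/2}+\eta_{h}\bigr)+C\eta_{h}^{2}\le\tfrac{1}{2}Q+C\eta_{h}^{2},
\]
whence $Q\le C\eta_{h}^{2}$, and the theorem follows from the reduction of the first paragraph.

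The step I expect to be the main obstacle is the bookkeeping of the self-absorption: the quantity $Q$ reappears on the right-hand side twice — once directly through Lemma \ref{uplemma4.3} and once (via $\Vert e_{c}\Vert_{\varepsilon}$) through Lemma \ref{uplemma4.2} — and one must verify that the combined coefficient of $Q$ after applying Young's inequality is strictly less than $1$, which is exactly why the $\tfrac{1}{4}$ in Lemma \ref{uplemma4.3} is kept and why Young's inequality must be applied with a small parameter. A subsidiary point that needs a clean treatment is the interpretation of $\3bar u-u_{h}\3bar$ for $u\notin V_{h}$, handled by the identities $D_{w}^{2}u=\mathbb{Q}_{h}(D^{2}u)$, $\nabla_{w}u=\mathbf{Q}_{h}(\nabla u)$ and the contractivity of the $L^{2}$-projections, together with the vanishing of the stabilizers on the conforming representation of $u$.
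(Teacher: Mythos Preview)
Your proposal is correct and follows essentially the same route as the paper: reduce $\3bar u-u_{h}\3bar^{2}$ to $Q+S_{1}(u_{h},u_{h})+S_{2}(u_{h},u_{h})$ via Lemmas~\ref{lemma2.1}--\ref{lemma2.2}, split $Q=l_{1}(u_{h})+l_{2}(u_{h})$, and close by combining Lemmas~\ref{uplemma4.2}--\ref{uplemma4.4} with a kickback. The only cosmetic difference is ordering: the paper first feeds Lemma~\ref{uplemma4.4} into the bound for $l_{1}$ and applies Young's inequality there (obtaining a $\tfrac14$-term), then adds the $\tfrac14$-term from Lemma~\ref{uplemma4.3} and absorbs once, whereas you absorb the $\tfrac14 Q$ from $l_{2}$ first and then apply Young to $C\eta_{h}Q^{1/2}$; both arrive at $Q\le C\eta_{h}^{2}$. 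Your explicit use of \eqref{up4.13} to pass from $\Vert\nabla u-\nabla_{w}u_{c}\Vert$ in Lemma~\ref{uplemma4.4} back to $\Vert\nabla u-\nabla_{w}u_{h}\Vert$ is a clean touch that the paper leaves implicit.
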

\begin{proof}
  For any cell $T\in \mathcal{T}_{h}$, applying Lemma \ref{lemma2.1} and Lemma \ref{lemma2.2} yields the following bounds 
  \begin{align*}
    &\left\Vert D_{w}^{2} (u-u_{h}) \right\Vert_{T} \leq \left\Vert D^{2} u - D_{w}^{2}u_{h} \right\Vert_{T},
    &&\left\Vert \nabla_{w} (u-u_{h}) \right\Vert_{T} \leq \left\Vert \nabla u - \nabla_{w}u_{h} \right\Vert_{T}.
  \end{align*}
  Consequently, we obtain
  \begin{align}\label{up4.16}
    \3bar u-u_{h} \3bar^{2} \leq& C \left( \varepsilon^{2} \left\Vert D^{2} u - D_{w}^{2}u_{h} \right\Vert_{\mathcal{T}_{h}}^{2} + \left\Vert \nabla u - \nabla_{w}u_{h} \right\Vert_{\mathcal{T}_{h}}^{2} \right) +S_{1}(u_{h}, u_{h}) + S_{2}(u_{h}, u_{h}).
  \end{align}

  We now consider the error term
  \begin{align*}
    \varepsilon^{2} \left\Vert D^{2} u - D_{w}^{2}u_{h} \right\Vert_{\mathcal{T}_{h}}^{2} + \left\Vert \nabla u - \nabla_{w}u_{h} \right\Vert_{\mathcal{T}_{h}}^{2} = l_{1}(u_{h})+l_{2}(u_{h}), 
  \end{align*}
  where
  \begin{align*}
    &l_{1}(u_{h}) = \varepsilon^{2} \left( D^{2} u - D_{w}^{2}u_{h}, D^{2} u - D_{w}^{2}u_c \right)_{\mathcal{T}_{h}} + \left( \nabla u - \nabla_{w}u_{h}, \nabla u - \nabla_{w}u_c \right)_{\mathcal{T}_{h}},\\
    &l_{2}(u_{h}) = \varepsilon^{2} \left( D^{2} u - D_{w}^{2}u_{h}, D_{w}^{2}e_{d} \right)_{\mathcal{T}_{h}} + \left( \nabla u - \nabla_{w}u_{h}, \nabla_{w}e_{d} \right)_{\mathcal{T}_{h}}.
  \end{align*}
  By Lemma \ref{uplemma4.2} and Lemma \ref{uplemma4.4}, we have
  \begin{align}
    l_{1}(u_{h}) \leq& C \eta_{h} \Vert e_c \Vert_{\varepsilon} \notag \\
    \leq& C\eta_{h} \left( \varepsilon^{2} \left\Vert D^{2} u - D_{w}^{2} u_{h} \right\Vert_{\mathcal{T}_{h}}^{2} + \left\Vert \nabla u - \nabla_{w} u_{c} \right\Vert_{\mathcal{T}_{h}}^{2} \right)^{1/2} + C \eta_{h}^{2} \notag \\
    \leq& \frac{1}{4} \left( \varepsilon^{2} \left\Vert D^{2} u - D_{w}^{2} u_{h} \right\Vert_{\mathcal{T}_{h}}^{2} + \left\Vert \nabla u - \nabla_{w} u_{c} \right\Vert_{\mathcal{T}_{h}}^{2} \right) + C \eta_{h}^{2}.\label{up4.17}
  \end{align}
  Combining (\ref{up4.17}) with Lemma \ref{uplemma4.3}, we obtain 
  \begin{align*}
    \varepsilon^{2} \left\Vert D^{2} u - D_{w}^{2}u_{h} \right\Vert_{\mathcal{T}_{h}}^{2} + \left\Vert \nabla u - \nabla_{w}u_{h} \right\Vert_{\mathcal{T}_{h}}^{2}
    \leq& \frac{1}{2} \left( \varepsilon^{2} \left\Vert D^{2} u - D_{w}^{2} u_{h} \right\Vert_{\mathcal{T}_{h}}^{2} + \left\Vert \nabla u - \nabla_{w} u_{c} \right\Vert_{\mathcal{T}_{h}}^{2} \right) + C \eta_{h}^{2}.
  \end{align*}
  which implies
  \begin{align}\label{up4.18}
    \varepsilon^{2} \left\Vert D^{2} u - D_{w}^{2}u_{h} \right\Vert_{\mathcal{T}_{h}}^{2} + \left\Vert \nabla u - \nabla_{w}u_{h} \right\Vert_{\mathcal{T}_{h}}^{2}
    \leq& C \eta_{h}^{2}.
  \end{align}
  Substituting (\ref{up4.18}) into (\ref{up4.16}) yields
  \begin{align*}
    \3bar u-u_{h} \3bar^{2} \leq& C \eta_{h}^{2},
  \end{align*}
  which completes the proof.
\end{proof}

\subsection{Lower bound}
In this section, we establish the efficiency of the proposed a posteriori error estimator for guiding adaptive mesh refinement in the singularly perturbed problem. To derive the efficiency bounds, we employ bubble function techniques.

Let $b_T: T \to \mathbb{R}$ denote the standard interior bubble function on a cell $T$, defined by $b_T = b_{\widehat{T}} \circ F_T$, where $b_{\widehat{T}}$ is the reference bubble function. Specifically, if $\widehat{T}$ is the reference triangle with barycentric coordinates $\lambda_1, \lambda_2, \lambda_3$, then $b_{\widehat{T}} = 27\lambda_1\lambda_2\lambda_3$; if $\widehat{T}$ is the reference rectangle with coordinates $\lambda_1, \lambda_2$, then $b_{\widehat{T}} = (1 - \lambda_1^2)(1 - \lambda_2^2)$.

For each interior edge $e \in \mathcal{E}_h$, let $\widetilde{T} \subset T_{1} \cup T_{2}$ be the largest rhombus contained in the union of the two adjacent cells $T_{1}$ and $T_{2}$, with $e$ as one of its diagonals (see Fig. 5). We define $b_{\widetilde{T}}: \widetilde{T} \to \mathbb{R}$ as the corresponding bubble function on the rhombus $\widetilde{T}$.

The following theorem shows the efficiency of the estimator globally, which is a direct consequence of the last theorem.
\begin{theorem}
  Let $u\in H^2_0(\Omega)$ be the solution to (\ref{1.1})-(\ref{1.2}), and let $u_h \in V_{h}^{0}$ be the numerical approximation obtained from the scheme (\ref{3.3}). Then there exists a positive constant $C$, independent of $h$, $u$ and $u_h$, so that
  \begin{equation}\label{lower_bound}
    \eta_{h} \leq C \left( \3bar u - u_{h} \3bar + \left(\sum_{T\in\mathcal{T}_{h}} \alpha_{T}^{2} \left\Vert f-f_{h} \right\Vert_{T}^{2} \right)^{1/2}\right).
  \end{equation}
\end{theorem}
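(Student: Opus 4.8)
The plan is to establish the efficiency estimate term by term, bounding each contribution to $\eta_h$ — namely the volume residuals $\eta_{T,2}$, the edge jumps $\eta_{e,1}$, $\eta_{e,2}$, and the stabilizer terms $S_i(u_h,u_h)$ — by the energy-type error $\3bar u - u_h\3bar$ plus the data oscillation $\big(\sum_T \alpha_T^2\|f-f_h\|_T^2\big)^{1/2}$. The oscillation term $\eta_{T,1}$ is already of the required form and needs no work. The core technique is the classical bubble-function argument of Verfürth, adapted to the singularly perturbed, weak-Galerkin setting: for a cell $T$ one multiplies $R_T$ by $b_T R_T$, uses the norm equivalence $\|R_T\|_T^2 \lesssim (R_T, b_T R_T)_T$ for polynomials, integrates by parts to bring in $u - u_h$ and its weak derivatives, and uses the inverse estimates $\|\nabla(b_T R_T)\|_T \lesssim h_T^{-1}\|R_T\|_T$, $\|D^2(b_T R_T)\|_T \lesssim h_T^{-2}\|R_T\|_T$ together with the defining property of $R_T$ (which encodes the strong form $\varepsilon^2\Delta^2 u - \Delta u = f$). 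Scaling by $\alpha_T$ and choosing whichever branch of the $\min$ in $\alpha_T$ is active then produces the balanced bound; the analogous rhombus-bubble construction $b_{\widetilde T}$ handles the edge jumps $J_{e,1}$, $J_{e,2}$.

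Concretely, I would proceed as follows. First, for the volume term: localize to $T$, set $w_T = \alpha_T^2 b_T R_T \in H_0^2(T)$ extended by zero, test the error equation (the weak form of \eqref{1.1}–\eqref{1.2} against $w_T$ combined with the WG scheme \eqref{3.3}) to get $\alpha_T^2\|R_T\|_T^2 \lesssim \big(\varepsilon^2(D^2u - D_w^2 u_h, D^2 w_T)_T + (\nabla u - \nabla_w u_h, \nabla w_T)_T + (f - f_h, w_T)_T\big)$ up to the edge-free interior identity; then apply Cauchy–Schwarz, the inverse inequalities for $w_T$, and $\|w_T\|_T \lesssim \alpha_T^2\|R_T\|_T$, $\|\nabla w_T\|_T \lesssim \alpha_T^2 h_T^{-1}\|R_T\|_T$, $\|D^2 w_T\|_T \lesssim \alpha_T^2 h_T^{-2}\|R_T\|_T$. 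The key is that $\alpha_T^2 h_T^{-2}\varepsilon \le \varepsilon^{-1}h_T^2 \cdot h_T^{-2}\varepsilon = 1$ when $\alpha_T = \varepsilon^{-1}h_T^2$ and $\le \varepsilon\, h_T\cdot h_T^{-2} = \varepsilon h_T^{-1}\le 1$ (in the relevant regime) when $\alpha_T = h_T$, so the $\varepsilon^2\|D^2\cdot\|$ contribution is absorbed into $\|u-u_h\|_\varepsilon$-type quantities without any loss in $\varepsilon$; the $\nabla$ term is handled identically with the weight $\alpha_T h_T^{-1}\le 1$. Summing over $T$ and invoking the already-proved upper bound relating $\|u-u_h\|_\varepsilon$-quantities and the weak-derivative errors to $\3bar u - u_h\3bar$ gives $\big(\sum_T \eta_{T,2}^2\big)^{1/2} \lesssim \3bar u-u_h\3bar + \text{osc}$.

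Next, for the edge jumps $J_{e,1}$ and $J_{e,2}$: use the rhombus bubble $b_{\widetilde T}$ on $\widetilde T \subset T_1\cup T_2$, form $w_e = \alpha_{e,\cdot}^2\, b_{\widetilde T}\,(\text{polynomial extension of } J_{e,\cdot})$, which lies in $H_0^2(\widetilde T)$, test again, now picking up both the volume residuals on $T_1,T_2$ (already controlled) and the jump term on $e$, and use the edge–cell scaling $\|J_{e,\cdot}\|_e^2 \lesssim (J_{e,\cdot}, b_{\widetilde T} J_{e,\cdot})_e$ together with $\|w_e\|_{T_i}\lesssim h_e^{1/2}\alpha_{e,\cdot}^2\|J_{e,\cdot}\|_e$ and the corresponding inverse estimates. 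Checking that $\alpha_{e,1}, \alpha_{e,2}$ are exactly the weights that make the $\varepsilon$-dependence balanced in both branches of their defining minima is the bookkeeping heart of this step. Finally, the stabilizer contributions $S_1(u_h,u_h)^{1/2}$ and $S_2(u_h,u_h)^{1/2}$ are bounded directly: they appear as summands in $\3bar u-u_h\3bar^2$ via \eqref{3.4} and the fact that $\3bar v_h\3bar^2 = B_h(v_h,v_h)+A_h(v_h,v_h) \ge S_1(v_h,v_h)+S_2(v_h,v_h)$, but since we want a bound on the stabilizer of $u_h$ itself in terms of the error, I would instead use the projection $\mathcal{Q}_h u$: since $S_i(\mathcal{Q}_h u,\mathcal{Q}_h u)$ is a higher-order approximation term it is absorbed, and $S_i(u_h,u_h)^{1/2} \le S_i(u_h-\mathcal{Q}_h u, u_h-\mathcal{Q}_h u)^{1/2} + S_i(\mathcal{Q}_h u,\mathcal{Q}_h u)^{1/2} \lesssim \3bar u_h - \mathcal{Q}_h u\3bar + \text{(h.o.t.)} \lesssim \3bar u - u_h\3bar + \text{osc}$, using the triangle inequality and standard approximation/consistency properties. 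Collecting all pieces yields \eqref{lower_bound}.

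I expect the main obstacle to be the $\varepsilon$-uniform bookkeeping in the inverse-inequality steps: one must verify in \emph{each} of the two branches of every $\min$ defining $\alpha_T$, $\alpha_{e,1}$, $\alpha_{e,2}$ that the product of the weight with the appropriate negative power of $h_T$ (and power of $\varepsilon$) stays bounded by a constant, so that the second-order part of the energy norm is never amplified by a negative power of $\varepsilon$. A secondary technical point is that $R_T$, $J_{e,1}$, $J_{e,2}$ are built from the \emph{weak} operators $D_w^2 u_h$, $\nabla_w u_h$ rather than classical derivatives of $u_h$, so the integration-by-parts identities must be carried out against the definitions \eqref{3.1}–\eqref{3.2}, which introduces extra boundary terms on $\partial T$ that recombine precisely into the stabilizer $S_i$ — this cancellation must be tracked carefully but follows the pattern already used in the proof of Lemma~\ref{uplemma4.2}.
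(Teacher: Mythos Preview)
Your overall strategy matches the paper's: bubble-function localization for $R_T$, $J_{e,1}$, $J_{e,2}$, combined with inverse estimates and the $\varepsilon$-balanced weights $\alpha_T$, $\alpha_{e,i}$. Two points deserve correction, however.

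First, the test functions you propose are not regular enough. You write $w_T = \alpha_T^2\, b_T R_T \in H_0^2(T)$, but a single power of the interior bubble $b_T$ only vanishes on $\partial T$; its gradient does not. For a fourth-order problem the localized test function must lie in $H_0^2(T)$ so that the boundary terms generated by two integrations by parts all drop. The paper takes $v = b_T^2 R_T$ for exactly this reason, and on the edge patch it uses $b_{\widetilde T}^{\,3}$ (and a further factor $b_l$ vanishing on $e$ for the normal-derivative jump $J_{e,2}$) to secure $C^1$-vanishing on $\partial\widetilde T$. Your edge argument with a bare $b_{\widetilde T}$ would leave uncontrolled boundary contributions. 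This is easily repaired by raising the bubble powers, after which the inverse estimates and norm equivalences you invoke go through unchanged (the polynomial degrees shift, but the constants remain $h$- and $\varepsilon$-independent).

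Second, your treatment of the stabilizers is unnecessarily elaborate. Because $u\in H^2(\Omega)$, its trace and gradient trace agree with $u_b$ and $\mathbf u_g$ on every edge, so $S_i(u,u)=0$ and hence $S_i(u_h,u_h)=S_i(u-u_h,\,u-u_h)$, which is literally a summand in $\3bar u-u_h\3bar^2$ by \eqref{3.4}. No projection $\mathcal Q_h u$, triangle inequality, or higher-order remainder is needed; the paper simply absorbs these terms without comment. Your detour via $\mathcal Q_h u$ would work but obscures this one-line observation.
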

\begin{proof}
  Consider a fixed cell $T \in \mathcal{T}_{h}$ and let $v\in H_{0}^{2}(\Omega) \cap H_{0}^{2}(T)$ be a polynomial function on $T$ that vanishes on $\Omega\backslash T$. Applying Lemma \ref{lemma2.1}, Lemma \ref{lemma2.2}, and integration by parts yields  
  \begin{align}
    \left( R_{h}, v \right)_{T}
    =& \varepsilon^{2} \left( D^{2}u, D^{2}v \right)_{T} + \left( \nabla u, \nabla v \right)_{T} - \varepsilon^{2} \left( D_{w}^{2}u_{h}, D^{2}v \right)_{T} - \left( \nabla_{w} u_{h}, \nabla v \right)_{T} \notag \\
    &-  \left\langle \left(\nabla\cdot \varepsilon^{2} D_{w}^{2}u_{h} - \nabla_{w} u_{h} \right) \cdot\mathbf{n}, v \right\rangle_{\partial T} + \left\langle \varepsilon^{2} D_{w}^{2}u_{h} \mathbf{n}, \nabla v \right\rangle_{\partial T} - \left( f-f_{h}, v \right)_{T} \notag \\
    =& \varepsilon^{2} \left( D_{w}^{2}e_{h}, D^{2}v \right)_{T} + \left( \nabla_{w} e_{h}, \nabla v \right)_{T} - \left( f-f_{h}, v \right)_{T} \notag \\
    &-  \left\langle \left(\nabla\cdot \varepsilon^{2} D_{w}^{2}u_{h} - \nabla_{w} u_{h} \right) \cdot\mathbf{n}, v \right\rangle_{\partial T} + \left\langle \varepsilon^{2} D_{w}^{2}u_{h} \mathbf{n}, \nabla v \right\rangle_{\partial T}.\label{low4.20}
  \end{align}
  Now, set $v=b_{T}^{2} R_{T}$ in \eqref{low4.20}. Using the Cauchy-Schwarz inequality and inverse inequality, we obtain 
  \begin{align*}
    \left( R_{h}, b_{T}^{2} R_{T} \right)_{T} =& \varepsilon^{2} \left( D_{w}^{2}e_{h}, D^{2} b_{T}^{2} R_{T} \right)_{T} + \left( \nabla_{w} e_{h}, \nabla b_{T}^{2} R_{T} \right)_{T} - \left( f-f_{h}, b_{T}^{2} R_{T} \right)_{T}\\
    \leq& \varepsilon^{2} \left\Vert D_{w}^{2}e_{h} \right\Vert_{T} \left\Vert D^{2} b_{T}^{2} R_{T} \right\Vert_{T} + \left\Vert \nabla_{w}e_{h} \right\Vert_{T} \left\Vert \nabla b_{T}^{2} R_{T} \right\Vert_{T} + \left\Vert f - f_{h} \right\Vert_{T} \left\Vert b_{T}^{2} R_{T} \right\Vert_{T}\\
    \leq& \varepsilon \left\Vert D_{w}^{2}e_{h} \right\Vert_{T} \varepsilon h_{T}^{-2} \left\Vert b_{T}^{2} R_{T} \right\Vert_{T} + \left\Vert \nabla_{w}e_{h} \right\Vert_{T} h_{T}^{-1} \left\Vert b_{T}^{2} R_{T} \right\Vert_{T} + \alpha_{T} \left\Vert f - f_{h} \right\Vert_{T} \alpha_{T}^{-1} \left\Vert b_{T}^{2} R_{T} \right\Vert_{T}\\
    \leq& \left( \varepsilon \left\Vert D_{w}^{2}e_{h} \right\Vert_{T} + \left\Vert \nabla_{w}e_{h} \right\Vert_{T} + \alpha_{T} \left\Vert f - f_{h} \right\Vert_{T} \right) \alpha_{T}^{-1} \left\Vert b_{T}^{2} R_{T} \right\Vert_{T}\\
    \leq& \left( \varepsilon \left\Vert D_{w}^{2}e_{h} \right\Vert_{T} + \left\Vert \nabla_{w}e_{h} \right\Vert_{T} + \alpha_{T} \left\Vert f - f_{h} \right\Vert_{T} \right) \alpha_{T}^{-1} \left\Vert b_{T}^{2} R_{T} \right\Vert_{T}
  \end{align*}
  We note that the norm $\left\Vert \cdot b_{T} \right\Vert_{T}$ defines a norm on the finite-dimensional space $\mathbb{P}_{k+2}(T)$, and is therefore equivalent to the standard $L^{2}$-norm $\left\Vert\cdot\right\Vert_{T}$ on this space. In particular, we have
  \begin{align*}
    \left\Vert R_{T} \right\Vert_{T}^{2} \leq C \left( R_{T}, b_{T}^{2}R_{T} \right)_{T} \leq C \left( \varepsilon \left\Vert D_{w}^{2}e_{h} \right\Vert_{T} + \left\Vert \nabla_{w}e_{h} \right\Vert_{T} + \alpha_{T} \left\Vert f - f_{h} \right\Vert_{T} \right) \alpha_{T}^{-1} \left\Vert R_{T} \right\Vert_{T},
  \end{align*}
  which implies
  \begin{align}\label{low4.21}
    \alpha_{T} \left\Vert R_{T} \right\Vert_{T} \leq C \left( \varepsilon \left\Vert D_{w}^{2}e_{h} \right\Vert_{T} + \left\Vert \nabla_{w}e_{h} \right\Vert_{T} + \alpha_{T} \left\Vert f - f_{h} \right\Vert_{T} \right).
  \end{align}

  Assume $\phi$ is constant in the normal direction to the edge $e$. Let $l: e \to \mathbb{R}$ be defined such that $l(s)$ denotes the length of the intersection between the line normal to $e$ at point $s \in e$ and the domain $\widetilde{T}$. Then the following norm estimate holds:
  \begin{align*}
    \|\phi\|_{T_{1} \cup T_{2}} = \left( \int_e \phi^2(s) l(s)  ds \right)^{1/2} \leq C h_{e}^{1/2} \|\phi\|_e \leq C h_{T}^{1/2} \|\phi\|_e. 
  \end{align*}
  Let $b_l: \widetilde{T} \to \mathbb{R}$ be a linear polynomial that vanishes along the edge $e$, and whose gradient satisfies $\nabla b_l|_e = h_{e}^{-1} \mathbf{n}$. Using this, we define a function $b_e: \Omega \to \mathbb{R}$ by $b_{e}|_{\widetilde{T}} = b_{l} b_{\widetilde{T}}^3$. This function satisfies $b_{e} \in C(\Omega) \cap H_{0}^{2}(\Omega)$, and clearly $b_e = 0$ on $\Omega \setminus \widetilde{T}$ and $e$.
  
  Let $v=b_{e} J_{e,2} \cdot \mathbf{n}$ and substitute it into equation (\ref{low4.20}) over the domain $\cup_{T} = T_{1} \cup T_{2}$, Applying Cauchy-Schwarz inequality, inverse inequality and (\ref{low4.21}) yields 
  \begin{align*}
    \left\langle J_{e,2}, \nabla \left( b_{e} J_{e,2} \cdot \mathbf{n} \right) \right\rangle_{e}
    \leq& \varepsilon^{2} \left\Vert D_{w}^{2} e_{h} \right\Vert_{\cup_{T}} \left\Vert D^{2} \left( b_{e} J_{e,2} \cdot \mathbf{n} \right) \right\Vert_{\cup_{T}} + \left\Vert \nabla_{w} e_{h} \right\Vert_{\cup_{T}} \left\Vert \nabla \left( b_{e} J_{e,2} \cdot \mathbf{n} \right) \right\Vert_{\cup_{T}}\\
    &+ \left\Vert R_{T} \right\Vert_{\cup_{T}} \left\Vert b_{e} J_{e,2} \cdot \mathbf{n} \right\Vert_{\cup_{T}} + \left\Vert f-f_{h} \right\Vert_{\cup_{T}} \left\Vert b_{e} J_{e,2} \cdot \mathbf{n} \right\Vert_{\cup_{T}}\\
    \leq& C \Big( \varepsilon \left\Vert D_{w}^{2} e_{h} \right\Vert_{\cup_{T}} \varepsilon h_{T}^{-3/2} \left\Vert J_{e,2} \right\Vert_{e} + \left\Vert \nabla_{w} e_{h} \right\Vert_{\cup_{T}} h_{T}^{-1/2} \left\Vert J_{e,2} \right\Vert_{e}\\
    &+ \alpha_{T} \left\Vert R_{T} \right\Vert_{\cup_{T}} \alpha_{T}^{-1}h_{T}^{1/2} \left\Vert J_{e,2} \right\Vert_{e} + \alpha_{T} \left\Vert f-f_{h} \right\Vert_{\cup_{T}} \alpha_{T}^{-1}h_{T}^{1/2} \left\Vert J_{e,2} \right\Vert_{e} \Big)\\
    \leq& C \left( \varepsilon \left\Vert D_{w}^{2} e_{h} \right\Vert_{\cup_{T}} + \left\Vert \nabla_{w} e_{h} \right\Vert_{T} + \alpha_{T} \left\Vert R_{T} \right\Vert_{\cup_{T}} + \alpha_{T} \left\Vert f-f_{h} \right\Vert_{\cup_{T}} \right) \alpha_{e,1}^{-1} \left\Vert J_{e,2} \right\Vert_{e}\\
    \leq& C \left( \varepsilon \left\Vert D_{w}^{2} e_{h} \right\Vert_{\cup_{T}} + \left\Vert \nabla_{w} e_{h} \right\Vert_{\cup_{T}} + \alpha_{T} \left\Vert f-f_{h} \right\Vert_{\cup_{T}} \right) \alpha_{e,1}^{-1} \left\Vert J_{e,2} \right\Vert_{e},
  \end{align*}
  where we have used $\left\Vert b_{e}J_{e,2}\cdot \mathbf{n} \right\Vert_{\cup_{T}} \leq C\left\Vert J_{e,2} \right\Vert_{\cup_{T}} \leq C h_{T}^{1/2} \left\Vert J_{e,2} \right\Vert_{e}$.
  It can be directly verified that $\nabla \left( b_{e} J_{e,2} \cdot \mathbf{n} \right)|_{e} = h_{e}^{-1}\mathbf{n} b_{\widetilde{T}}^{3}|_{e} \left(j_{e,2}\cdot \mathbf{n}\right)|_{e}$. Consequently, we derive $\left\langle J_{e,2}, \nabla \left( b_{e} J_{e,2} \cdot \mathbf{n} \right) \right\rangle_{e} = h_{e}^{-1} \left\Vert  b_{\widetilde{T}}^{3/2} J_{e,2} \right\Vert_{e}^{2}$. 
  By norm equivalence and a scaling argument, we obtain the bound 
  \begin{align*}
    h_{T}^{-1}\left\Vert J_{e,2} \right\Vert_{e}^{2} \leq C h_{e}^{-1} \left\Vert  b_{\widetilde{T}}^{3/2} J_{e,2} \right\Vert_{e}^{2}
    \leq& C \left( \varepsilon \left\Vert D_{w}^{2} e_{h} \right\Vert_{\cup_{T}} + \left\Vert \nabla_{w} e_{h} \right\Vert_{\cup_{T}} + \alpha_{T} \left\Vert f-f_{h} \right\Vert_{\cup_{T}} \right) \alpha_{e,1}^{-1} \left\Vert J_{e,2} \right\Vert_{e},
  \end{align*}
  which implies
  \begin{align}\label{low4.22}
    \alpha_{e,2} \left\Vert J_{e,2} \right\Vert_{e} \leq C \left( \varepsilon \left\Vert D_{w}^{2} e_{h} \right\Vert_{\cup_{T}} + \left\Vert \nabla_{w} e_{h} \right\Vert_{\cup_{T}} + \alpha_{T} \left\Vert f-f_{h} \right\Vert_{\cup_{T}} \right),
  \end{align}
  since $\alpha_{e,2}=h_{T}^{-1} \alpha_{e,1}$.

  We now set $v=b_{\widetilde{T}}^{3} J_{e,1}$ and substitute it into equation (\ref{low4.20}) over the domain $\cup_{T} = T_{1} \cup T_{2}$, Applying Cauchy-Schwarz inequality, inverse inequality and (\ref{low4.21}), we obtain 
  \begin{align*}
    \left\langle J_{e,1}, b_{\widetilde{T}}^{3} J_{e,1} \right\rangle_{e}
    \leq& \varepsilon^{2} \left\Vert D_{w}^{2} e_{h} \right\Vert_{\cup_{T}} \left\Vert D^{2} \left(b_{\widetilde{T}}^{3} J_{e,1}\right) \right\Vert_{\cup_{T}} + \left\Vert \nabla_{w} e_{h} \right\Vert_{\cup_{T}} \left\Vert \nabla \left(b_{\widetilde{T}}^{3} J_{e,1}\right) \right\Vert_{\cup_{T}}\\
    &+ \left\Vert R_{T} \right\Vert_{\cup_{T}} \left\Vert b_{\widetilde{T}}^{3} J_{e,1} \right\Vert_{\cup_{T}} + \left\Vert f-f_{h} \right\Vert_{\cup_{T}} \left\Vert b_{\widetilde{T}}^{3} J_{e,1} \right\Vert_{\cup_{T}} + \left\Vert J_{e,2} \right\Vert_{e} \left\Vert \nabla \left(b_{\widetilde{T}}^{3} J_{e,1}\right) \right\Vert_{e}\\
    \leq& C\Big( \varepsilon \left\Vert D_{w}^{2} e_{h} \right\Vert_{\cup_{T}} \varepsilon h_{T}^{-3/2} \left\Vert J_{e,1} \right\Vert_{e} + \left\Vert \nabla_{w} e_{h} \right\Vert_{\cup_{T}} h_{T}^{-1/2} \left\Vert J_{e,1} \right\Vert_{e} + \alpha_{T} \left\Vert R_{T} \right\Vert_{\cup_{T}} \alpha_{T}^{-1}h_{T}^{1/2} \left\Vert J_{e,1} \right\Vert_{e}\\
    & + \alpha_{T} \left\Vert f-f_{h} \right\Vert_{\cup_{T}} \alpha_{T}^{-1}h_{T}^{1/2} \left\Vert J_{e,1} \right\Vert_{e} + \alpha_{e,2}\left\Vert J_{e,2} \right\Vert_{e} \alpha_{e,2}^{-1} h_{T}^{-1}\left\Vert J_{e,1} \right\Vert_{e} \Big)\\
    \leq& C \left( \varepsilon \left\Vert D_{w}^{2} e_{h} \right\Vert_{\cup_{T}} + \left\Vert \nabla_{w} e_{h} \right\Vert_{\cup_{T}} + \alpha_{T} \left\Vert R_{T} \right\Vert_{\cup_{T}} + \alpha_{T} \left\Vert f-f_{h} \right\Vert_{\cup_{T}} + \alpha_{e,2}\left\Vert J_{e,2} \right\Vert_{e}\right) \alpha_{e,1}^{-1} \left\Vert J_{e,1} \right\Vert_{e}\\
    \leq& C \left( \varepsilon \left\Vert D_{w}^{2} e_{h} \right\Vert_{\cup_{T}} + \left\Vert \nabla_{w} e_{h} \right\Vert_{\cup_{T}} + \alpha_{T} \left\Vert f-f_{h} \right\Vert_{\cup_{T}} \right) \alpha_{e,1}^{-1} \left\Vert J_{e,1} \right\Vert_{e},
  \end{align*}
  where we have used $\left\Vert b_{\widetilde{T}}^{3} J_{e,1} \right\Vert_{\cup_{T}} \leq C \left\Vert J_{e,1} \right\Vert_{\cup_{T}} \leq C h_{T}^{1/2} \left\Vert J_{e,2} \right\Vert_{e}$. 
  By norm equivalence and a scaling argument, we obtain the bound 
  \begin{align*}
    \left\Vert J_{e,1} \right\Vert_{e}^{2} = \left\langle J_{e,1}, b_{\widetilde{T}}^{3} J_{e,1} \right\rangle_{e} \leq& C \left( \varepsilon \left\Vert D_{w}^{2} e_{h} \right\Vert_{T} + \left\Vert \nabla_{w} e_{h} \right\Vert_{T} + \alpha_{T} \left\Vert f-f_{h} \right\Vert_{T} \right) \alpha_{e,1}^{-1} \left\Vert J_{e,1} \right\Vert_{e},
  \end{align*}
  it follows that
  \begin{align}\label{low4.23}
    \alpha_{e,1} \left\Vert J_{e,1} \right\Vert_{e} \leq& C \left( \varepsilon \left\Vert D_{w}^{2} e_{h} \right\Vert_{T} + \left\Vert \nabla_{w} e_{h} \right\Vert_{T} + \alpha_{T} \left\Vert f-f_{h} \right\Vert_{T} \right).
  \end{align}
  The desired result follows immediately from the definition of $\eta_{h}$, (\ref{low4.21}), (\ref{low4.22}) and (\ref{low4.23}).
\end{proof}

\section{Numerical Experiments}
In this section, we present a series of two-dimensional numerical experiments to assess the performance of the proposed a posteriori error estimator within an adaptive mesh refinement framework. Unless otherwise specified, we only consider $k=2$.
\begin{example}\label{exam6.1}
  Let $\Omega = (0, 1)^2$ and select the forcing function $f$ such that the exact solution of (\ref{1.1})-(\ref{1.2}) exhibits a sharp internal layer. The solution is given by
  \begin{align*}
    u(x, y)=xy(1-x)(1-y)exp\left(-1000\left((x-0.5)^{2}+(y-0.117)^{2}\right)\right).
  \end{align*}
\end{example}
We set the perturbation parameter to $\varepsilon = 1$ and $\theta = 0.3$. Figure \ref{Gc} illustrates the convergence history under adaptive refinement. The final adapted mesh is shown in Figure \ref{Gm}, while the exact and numerical solutions are displayed in Figures \ref{Gu} and \ref{Guh}, respectively. These results demonstrate that the adaptive strategy effectively refines the mesh near the singular region and that the error estimator agrees well with the error. 
\begin{figure}[h]
	\centering
	\subfigure[]{
		\begin{minipage}[h]{0.47\textwidth}
			\includegraphics[width=1\textwidth]{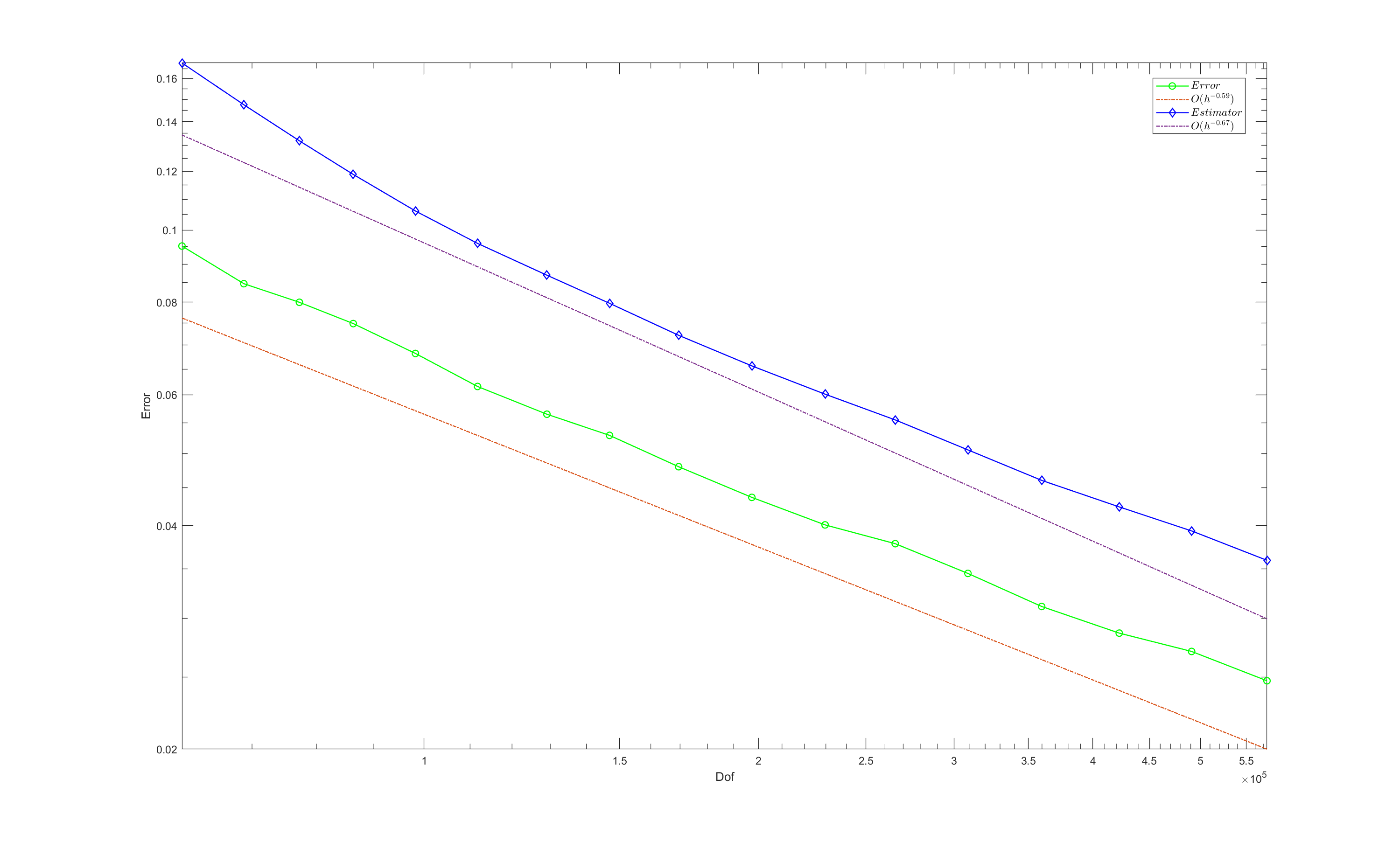} 
		\end{minipage}
		\label{Gc}
	}
    	\subfigure[]{
    		\begin{minipage}[h]{0.47\textwidth}
   		 	\includegraphics[width=1\textwidth]{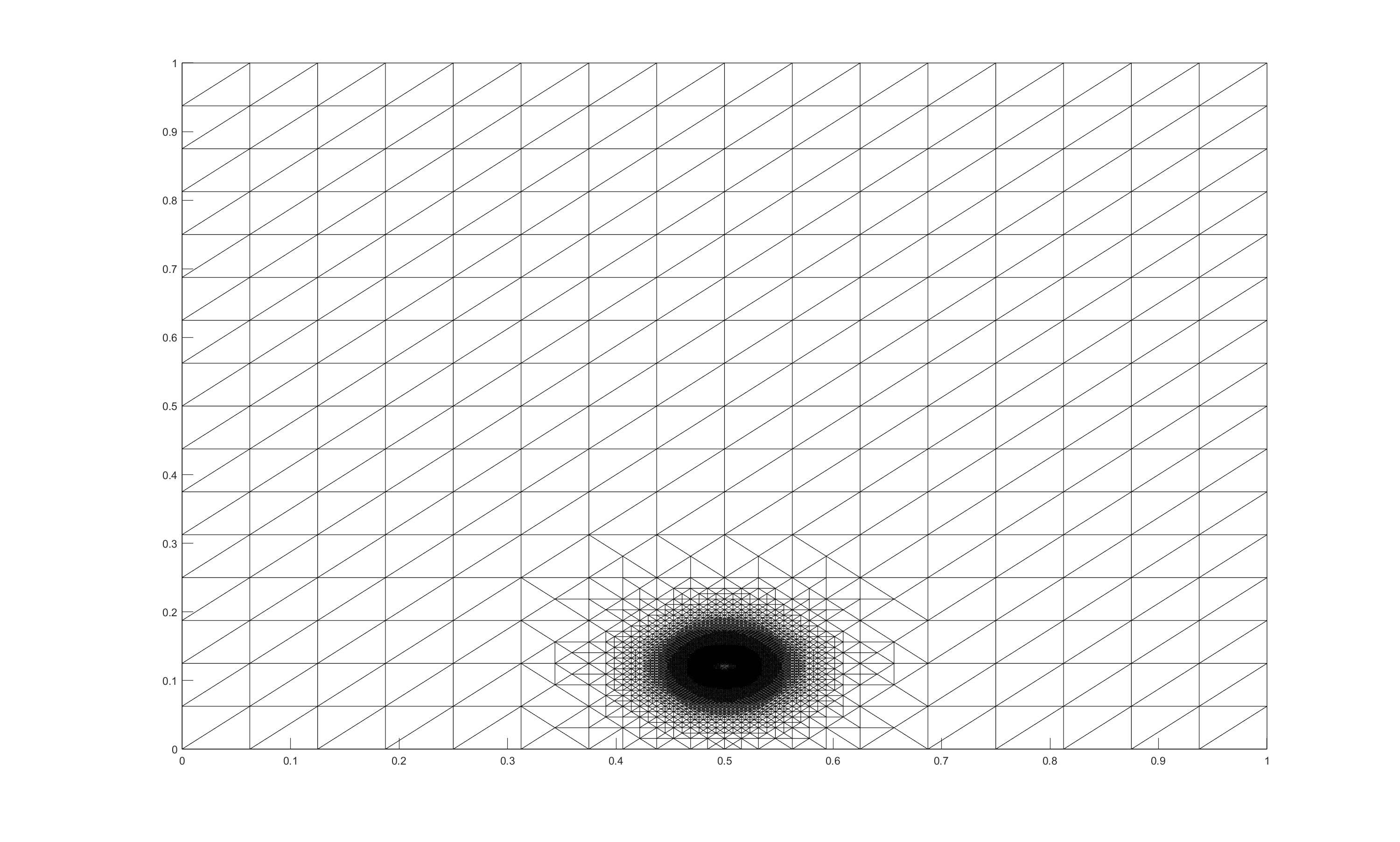}
    		\end{minipage}
		\label{Gm}
    	}
	\\ 
	\subfigure[]{
		\begin{minipage}[h]{0.48\textwidth}
			\includegraphics[width=1\textwidth]{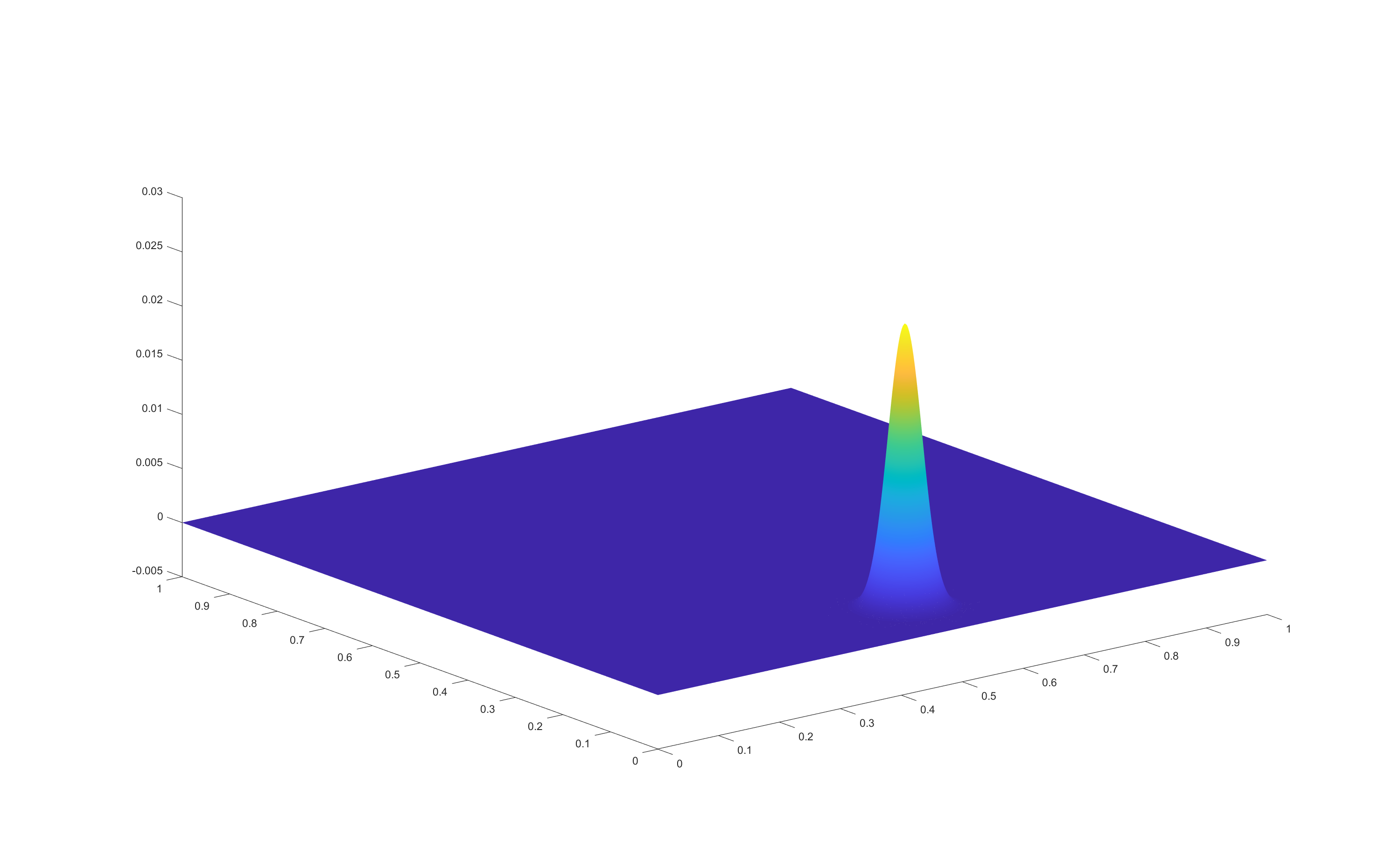} 
		\end{minipage}
		\label{Gu}
	}
    	\subfigure[]{
    		\begin{minipage}[h]{0.48\textwidth}
		 	\includegraphics[width=1\textwidth]{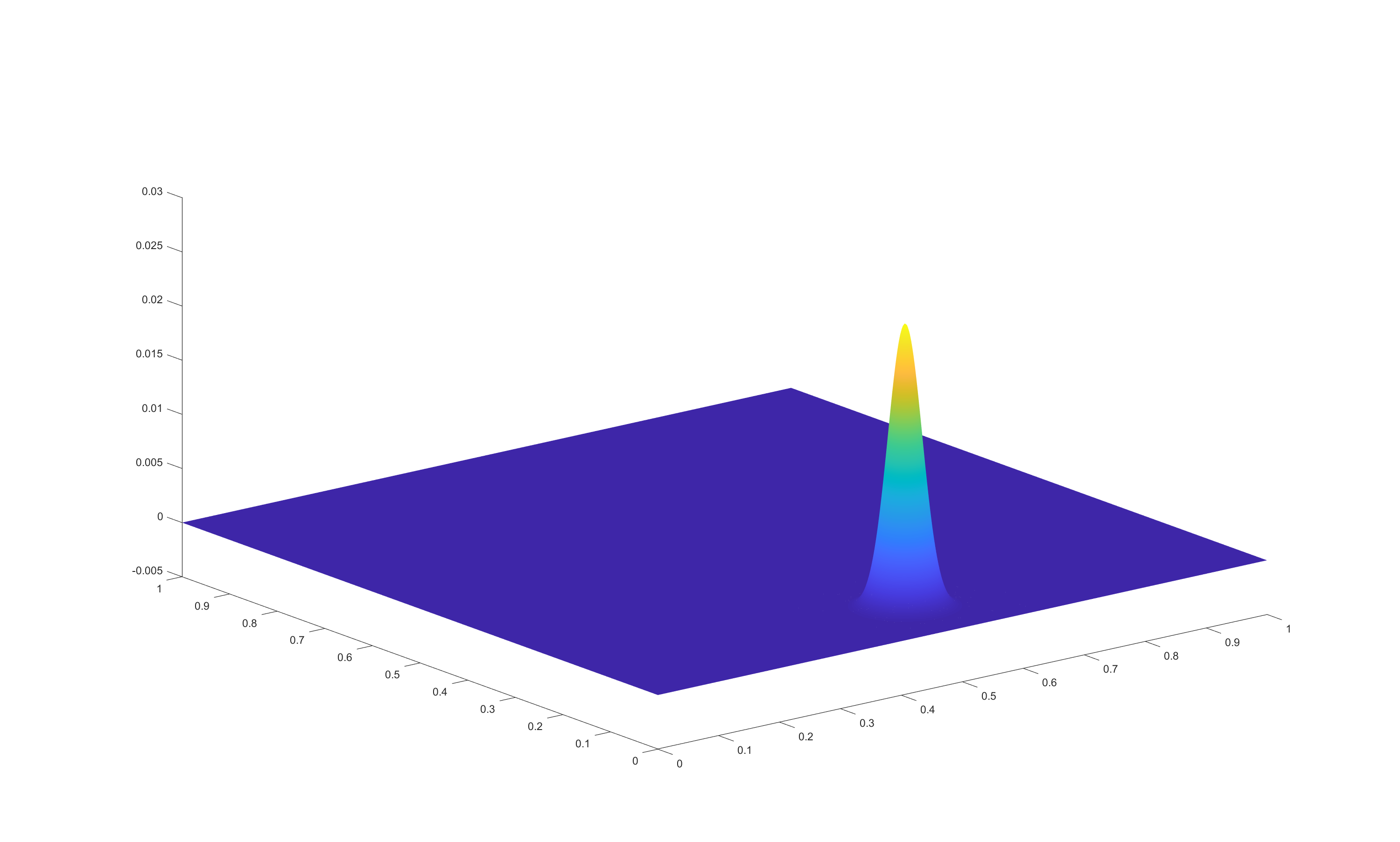}
    		\end{minipage}
		\label{Guh}
    	}
	\caption{(a) Convergence rates of the error and the error estimator; (b) The final adapted mesh; (c) Exact solution; (d) Numerical solution.}
	\label{fig1}
\end{figure}

\begin{example}\label{exam6.2}
  This example investigates the performance of the method in the presence of an interior layer. Let $\Omega = (0, 1)^2$ and select the source term $f$ such that the analytical solution to (\ref{1.1})-(\ref{1.2}) exhibits large gradients and is given by
  \begin{align*}
    u(x,y)=0.5x(1-x)(1-y)\left(1-\tanh \frac{\beta-x}{\gamma}\right). 
  \end{align*}
  Here, the parameters $\beta$ and $\gamma$ determine the location and thickness of the interior layer, respectively.
\end{example}
In this test, we set $\beta = 0.5$, $\gamma = 0.05$, $\varepsilon = 1$ and $\theta = 0.3$. Figure \ref{fig2} shows the convergence history under adaptive refinement, the final adapted mesh, and comparisons between the exact and numerical solutions. These results demonstrate that the adaptive scheme accurately captures the interior layer. 
\begin{figure}[h]
	\centering
	\subfigure[]{
		\begin{minipage}[h]{0.47\textwidth}
			\includegraphics[width=1\textwidth]{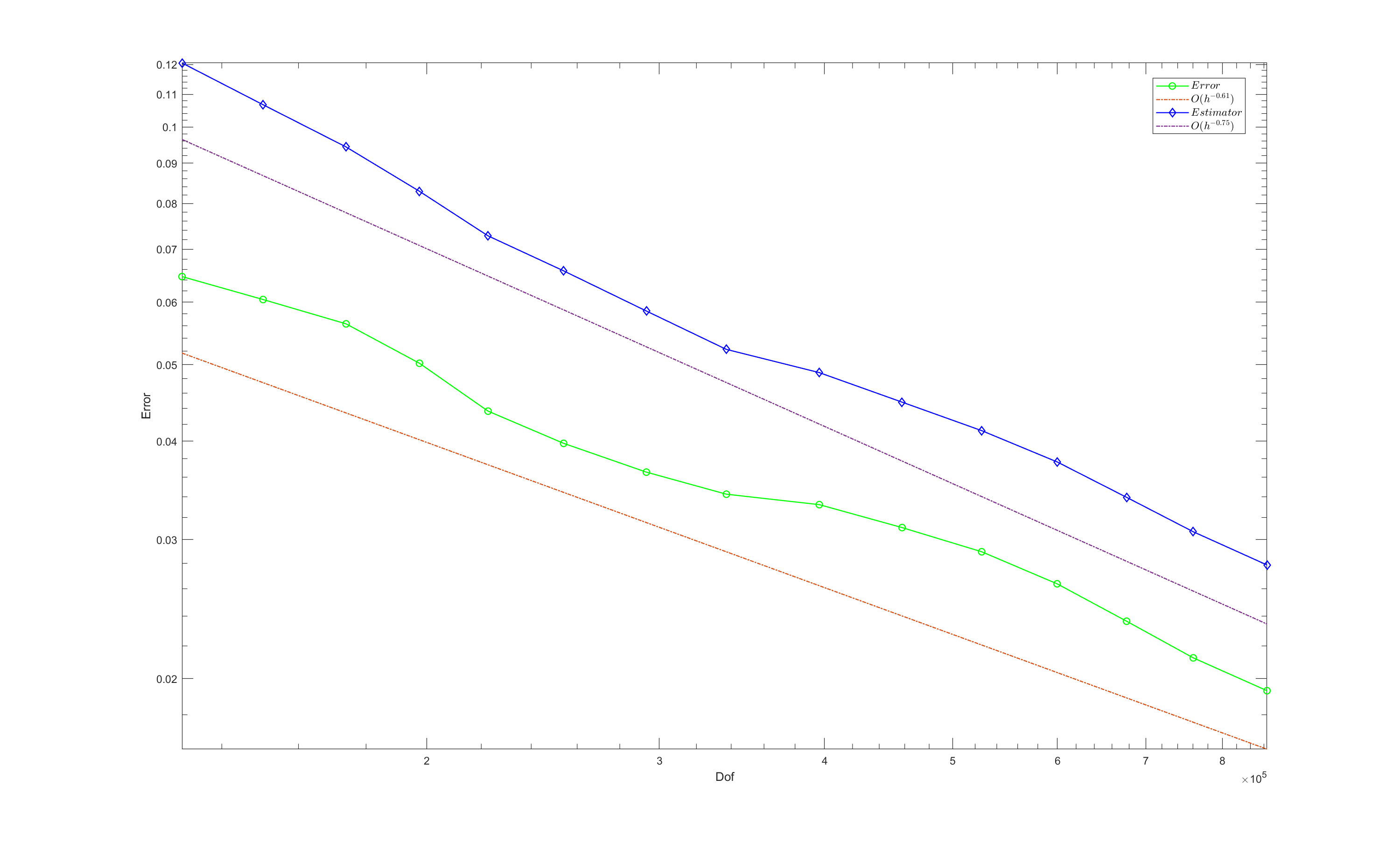} 
		\end{minipage}
		\label{Ic0.5}
	}
    	\subfigure[]{
    		\begin{minipage}[h]{0.47\textwidth}
   		 	\includegraphics[width=1\textwidth]{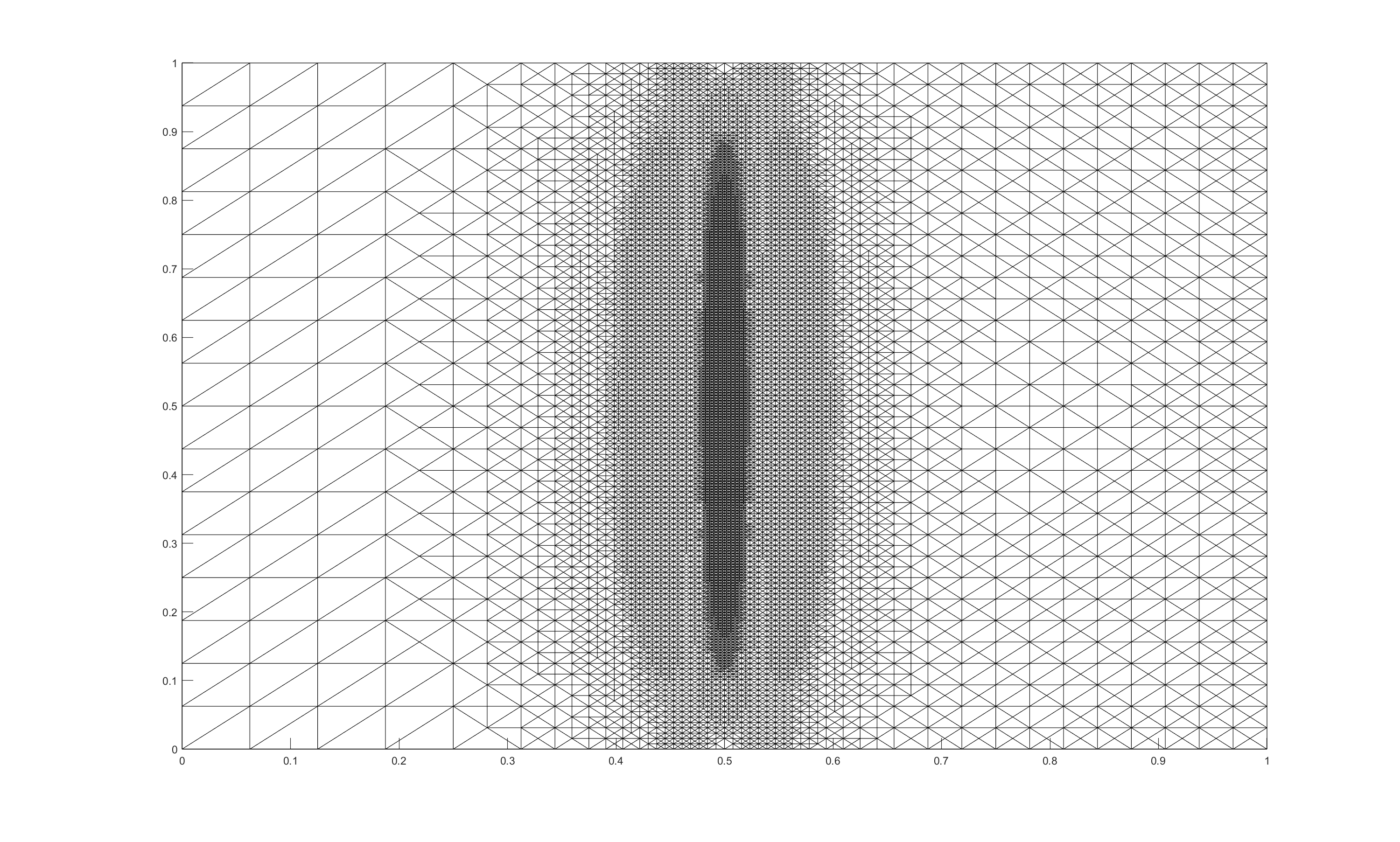}
    		\end{minipage}
		\label{Im0.5}
    	}
	\\ 
	\subfigure[]{
		\begin{minipage}[h]{0.48\textwidth}
			\includegraphics[width=1\textwidth]{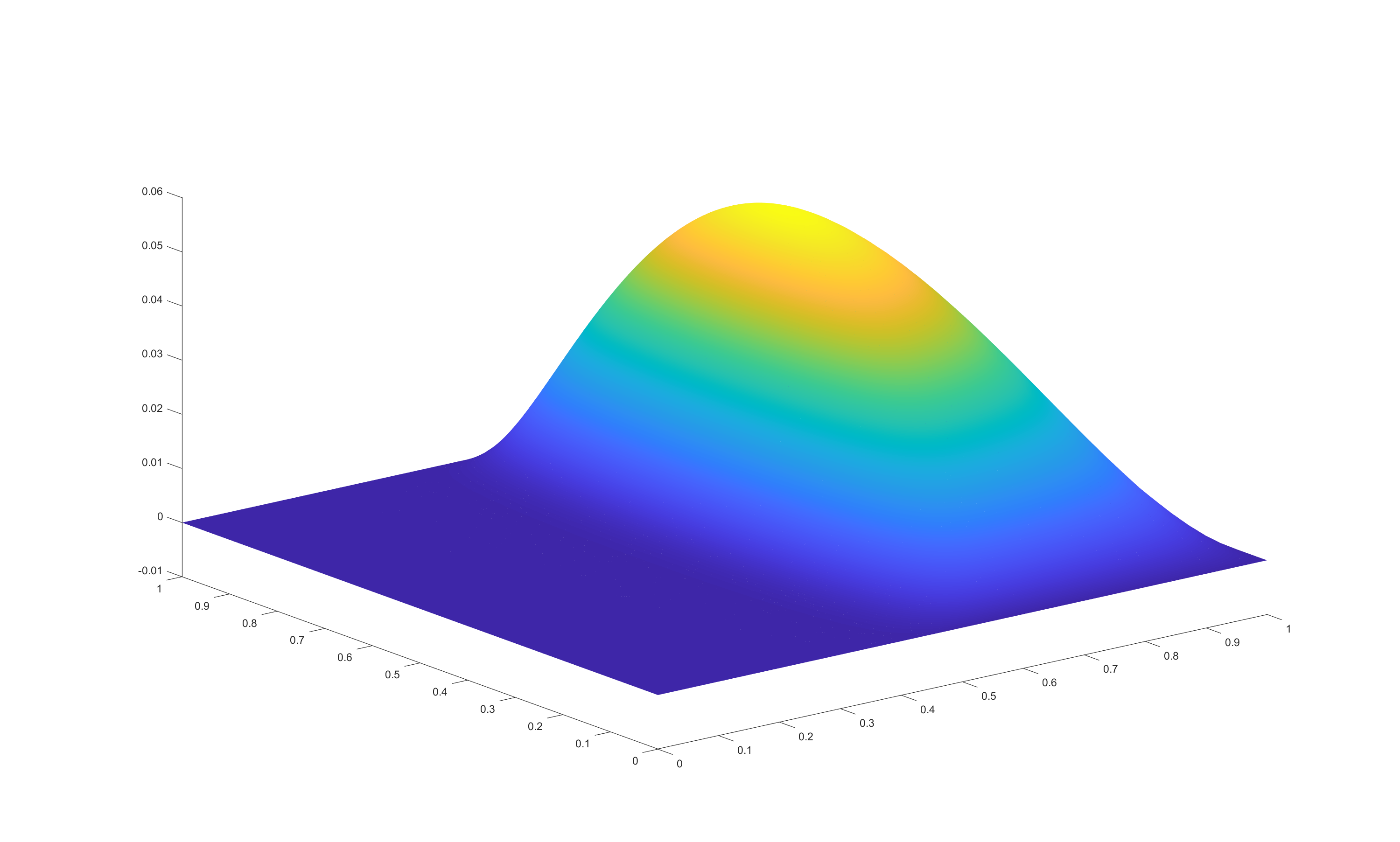} 
		\end{minipage}
		\label{Iu0.5}
	}
    	\subfigure[]{
    		\begin{minipage}[h]{0.48\textwidth}
		 	\includegraphics[width=1\textwidth]{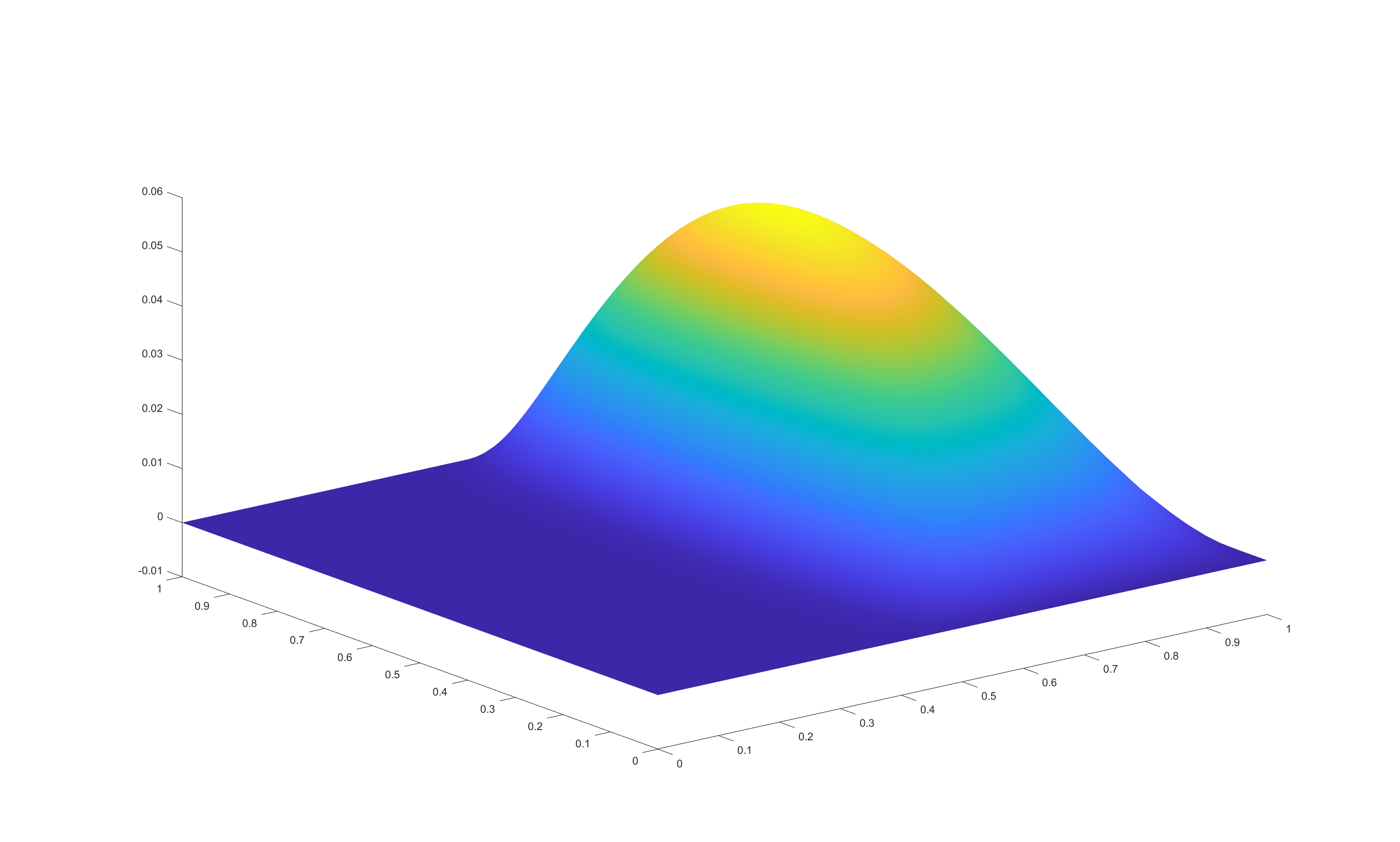}
    		\end{minipage}
		\label{Iuh0.5}
    	}
	\caption{(a) Convergence rates of the error and the error estimator; (b) The final adapted mesh; (c) Exact solution; (d) Numerical solution.}
	\label{fig2}
\end{figure}

\begin{example}\label{exam6.3}
  Let the exact solution be given by $u(x, y) = g(x)p(y)$, where the source term $f(x,y)$ is chosen accordingly and the component functions are defined as follows 
  \begin{align*}
    g(x)&=\frac{1}{2}\left[\sin(\pi x)+\frac{\pi \varepsilon}{1-e^{-1/ \varepsilon}}\left(e^{-x/ \varepsilon}+e^{(x-1)/ \varepsilon}-1-e^{-1/ \varepsilon}\right)\right],\\
    p(y)&=2y(1-y^{2})+\varepsilon\left[ld(1-2y)-3\frac{q}{l}+\left(\frac{3}{l}-d\right)e^{-y/\varepsilon}+\left(\frac{3}{l}+d\right)e^{(y-1)/ \varepsilon}\right].
  \end{align*}
  with the parameters $l = 1-e^{-1/ \varepsilon}$, $q=2-l$ and $d=1/(q-2\varepsilon l)$. 
\end{example}
In this test, we set $\varepsilon = 10^{-6}$ and $\theta = 0.5$. Figure \ref{fig3} shows the convergence history under adaptive refinement, the final adapted mesh, and comparisons between the exact and numerical solutions. 
\begin{figure}[h]
	\centering
	\subfigure[]{
		\begin{minipage}[h]{0.47\textwidth}
			\includegraphics[width=1\textwidth]{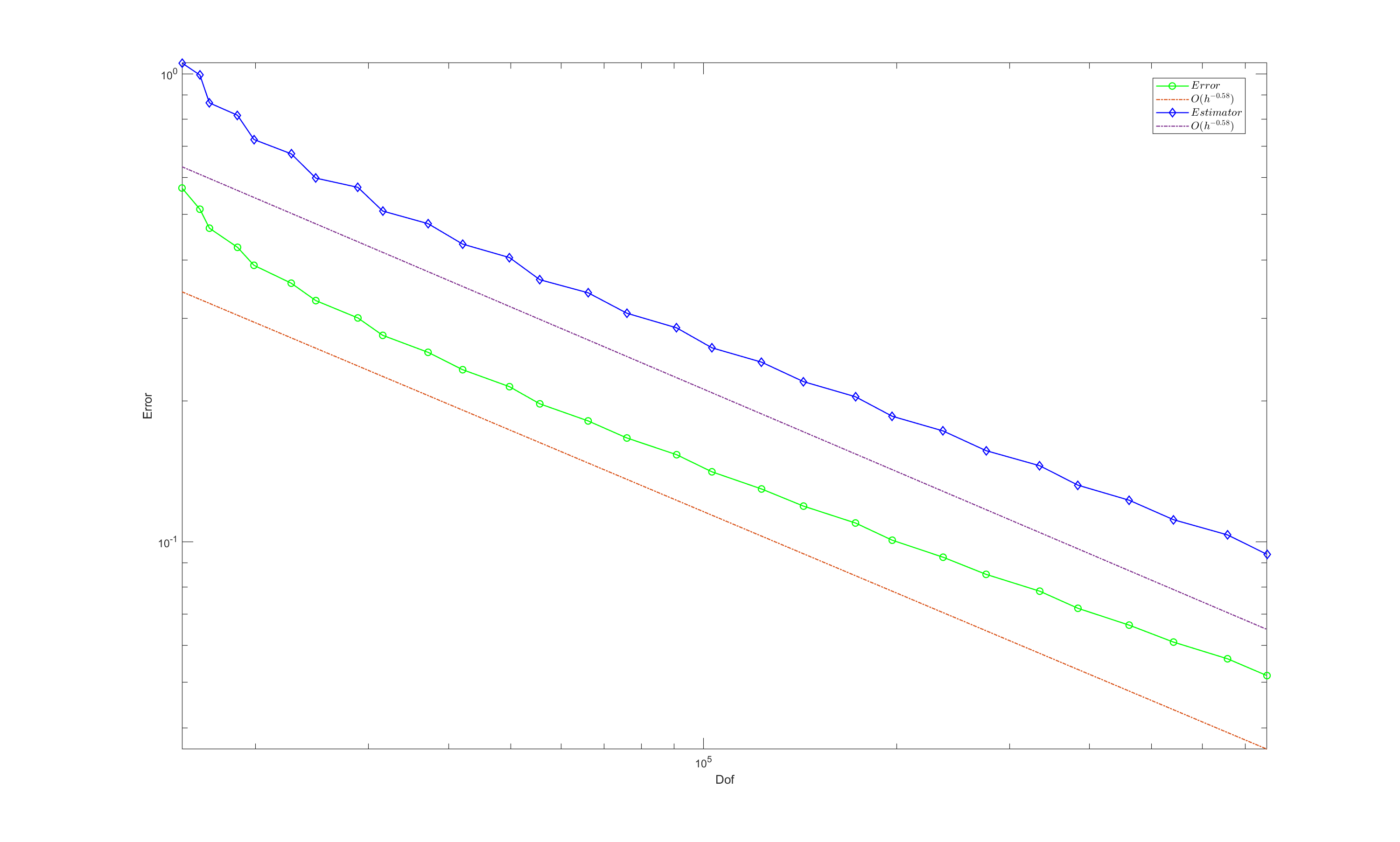} 
		\end{minipage}
		\label{S2c0.5}
	}
    	\subfigure[]{
    		\begin{minipage}[h]{0.47\textwidth}
   		 	\includegraphics[width=1\textwidth]{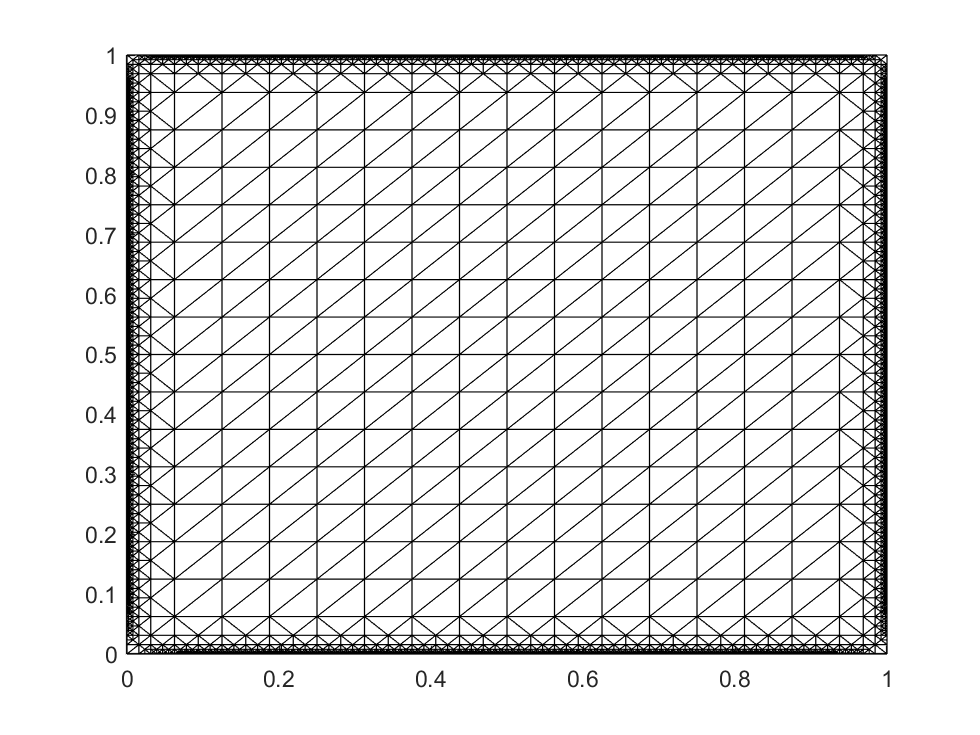}
    		\end{minipage}
		\label{S2m0.5}
    	}
	\\ 
	\subfigure[]{
		\begin{minipage}[h]{0.48\textwidth}
			\includegraphics[width=1\textwidth]{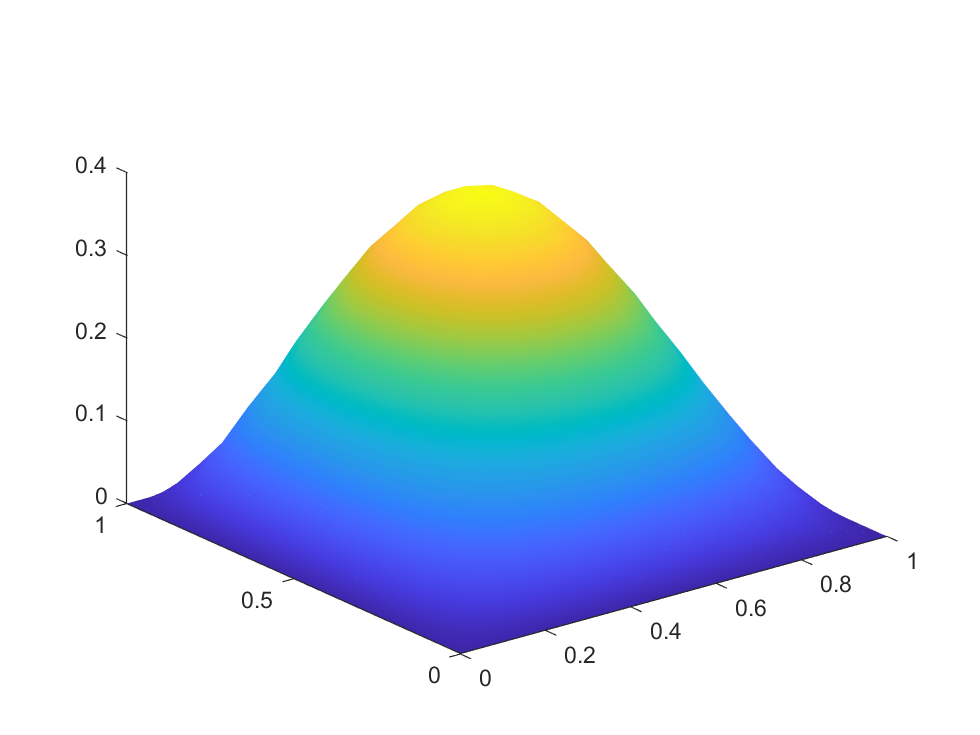} 
		\end{minipage}
		\label{S2u0.5}
	}
    	\subfigure[]{
    		\begin{minipage}[h]{0.48\textwidth}
		 	\includegraphics[width=1\textwidth]{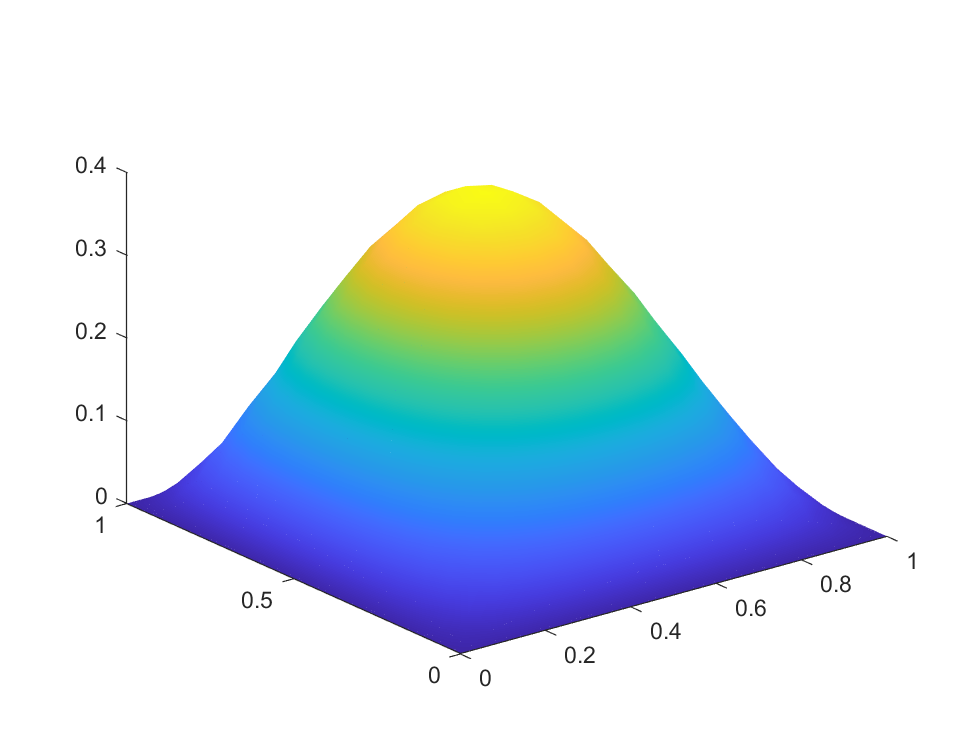}
    		\end{minipage}
		\label{S2uh0.5}
    	}
	\caption{(a) Convergence rates of the error and the error estimator; (b) The final adapted mesh; (c) Exact solution; (d) Numerical solution.}
	\label{fig3}
\end{figure}

\begin{example}
  This example follows the setup described in Han and Huang \cite{MR3039784}. Consider problem (\ref{1.1})-(\ref{1.2}) on the unit square $\Omega = (0, 1)^2$ with the source term
   \begin{align*}
    f(x, y) = 2\pi^{2}(1-\cos 2 \pi x \cos 2 \pi y).
   \end{align*}
\end{example}
Although the exact solution $u$ is not explicitly known, it is known to exhibit four sharp boundary layers near the edges of the domain. In the adaptive refinement procedure, the marking parameter is set to $\varepsilon = 10^{-6}$ and $\theta = 0.3$. 
\begin{figure}[h]
	\centering
	\subfigure[]{
		\begin{minipage}[h]{0.32\textwidth}
			\includegraphics[width=1\textwidth]{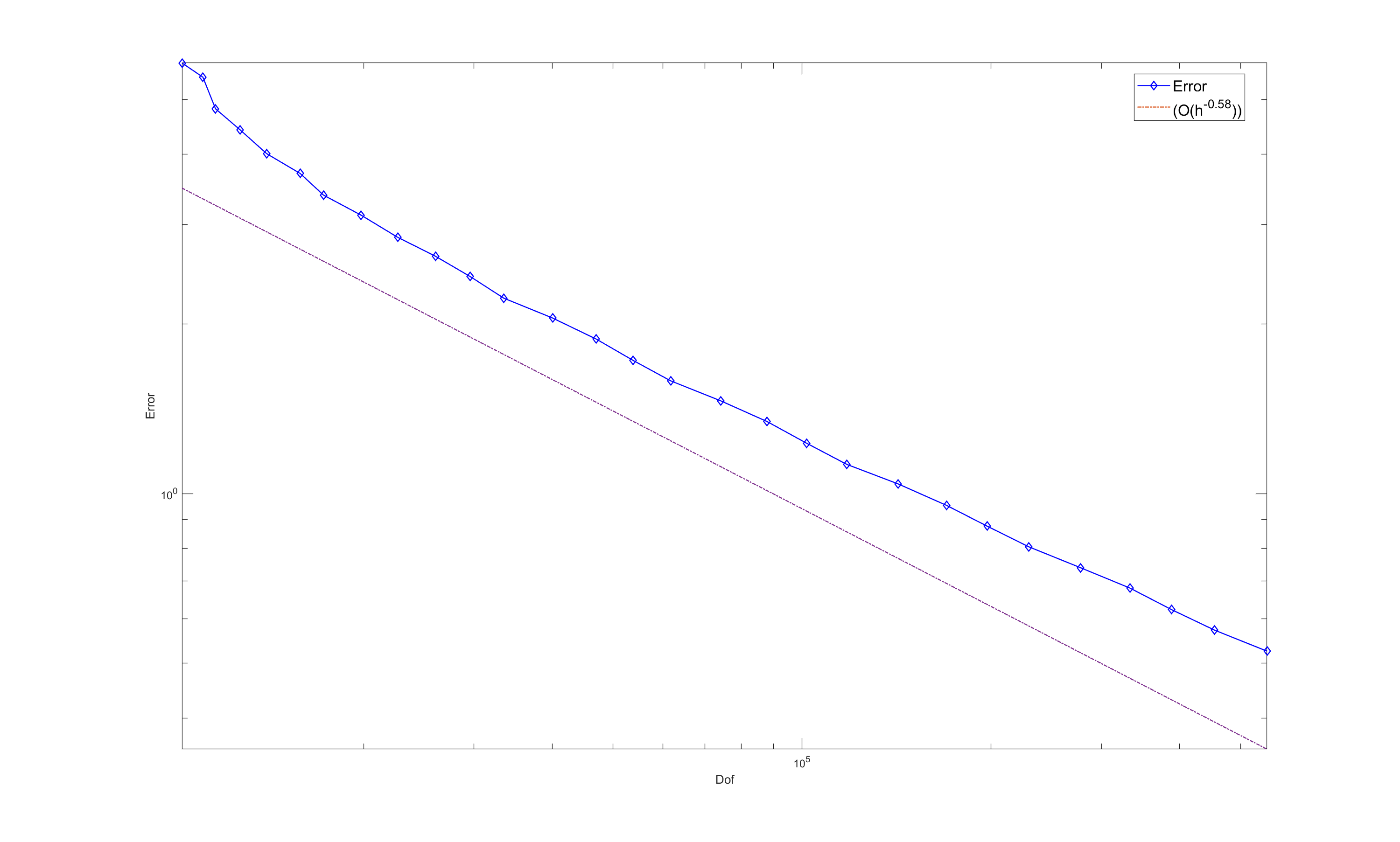} 
		\end{minipage}
		\label{Snc0.5}
	}
    	\subfigure[]{
    		\begin{minipage}[h]{0.30\textwidth}
   		 	\includegraphics[width=1\textwidth]{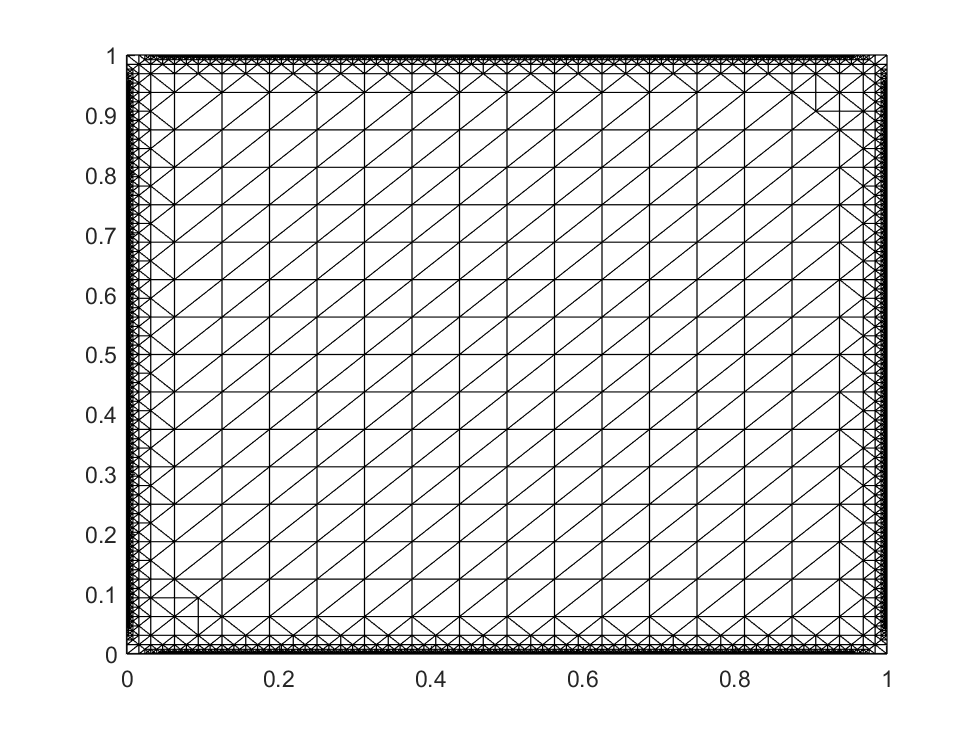}
    		\end{minipage}
		\label{Snm0.5}
    	}
    	\subfigure[]{
    		\begin{minipage}[h]{0.32\textwidth}
		 	\includegraphics[width=1\textwidth]{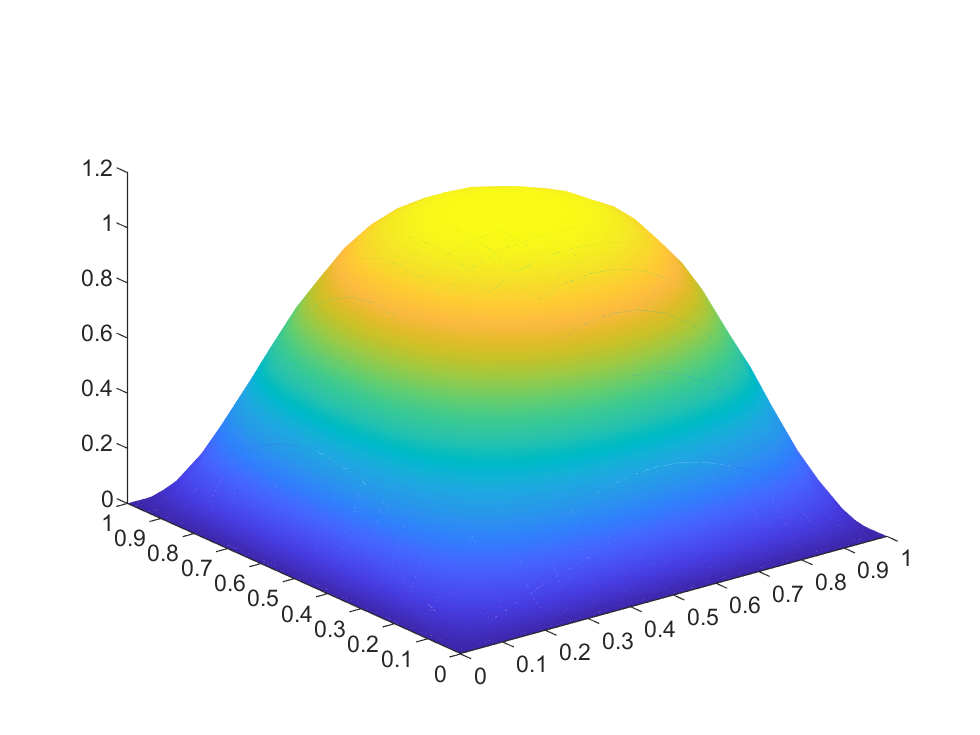}
    		\end{minipage}
		\label{Snuh0.5}
    	}
	\caption{(a) Convergence rates of the error and the error estimator; (b) The final adapted mesh; (c) Numerical solution.}
	\label{fig4}
\end{figure}


\smallskip
\noindent


\bibliographystyle{siam}
\bibliography{reference}

\newpage

\end{document}